\documentclass[12pt,a4paper]{amsart}

\usepackage{amsfonts,amscd,amsmath,amssymb,amsthm,mathrsfs}
\usepackage{array,latexsym,float,enumerate,ifpdf}
\usepackage{graphicx,epsfig}
\usepackage[top=2.7cm, bottom=2.7cm, left=2.5cm, right=2.5cm,twoside=false]{geometry}
\usepackage[all]{xy}
\usepackage[usenames,dvipsnames]{color}

\makeatletter
\def\subsection{\@startsection{subsection}{3}%
  \z@{.5\linespacing\@plus.7\linespacing}{.5\linespacing}%
  {\normalfont\itshape}}
\makeatother

\makeatletter
\renewenvironment{proof}[1][\proofname]{%
  \par\pushQED{\qed}\normalfont%
  \topsep6\p@\@plus6\p@\relax
  \trivlist\item[\hskip\labelsep\bfseries#1\@addpunct{.}]%
  \ignorespaces}{%
  \popQED\endtrivlist\@endpefalse}
\makeatother

\numberwithin{equation}{section}
\theoremstyle{plain}
\newtheorem{theorem}{Theorem}[section]
\newtheorem{lemma}[theorem]{Lemma}
\newtheorem{proposition}[theorem]{Proposition}
\newtheorem{corrolary}[theorem]{Corollary}
\newtheorem{conjecture}[theorem]{Conjecture}
\theoremstyle{definition}
\newtheorem{definition}[theorem]{Definition}
\newtheorem{example}[theorem]{Example}
\newtheorem{noname}[theorem]{}
\newtheorem{remark}[theorem]{Remark}
\newtheorem{construction}[theorem]{Construction}
\newtheorem{notation}[theorem]{Notation}
\theoremstyle{remark}
\newtheorem*{smallremark}{Remark}
\newtheorem*{claim2}{Claim}
\newtheorem{case}{Case} \makeatletter \@addtoreset{case}{theorem}\makeatother
\newtheorem*{claim}{Claim} % \makeatletter \@addtoreset{claim}{theorem}\makeatother

\newcommand{\bthm}{\begin{theorem}}
\newcommand{\bprop}{\begin{proposition}}
\newcommand{\blem}{\begin{lemma}}
\newcommand{\bcor}{\begin{corrolary}}
\newcommand{\brem}{\begin{remark}}
\newcommand{\bdfn}{\begin{definition}}
\newcommand{\bitem}{\begin{itemize}}
\newcommand{\bex}{\begin{example}}
\newcommand{\bno}{\begin{noname}}
\newcommand{\bsrem}{\begin{smallremark}}
\newcommand{\bnot}{\begin{notation}}
\newcommand{\bcon}{\begin{construction}}
\newcommand{\bca}{\begin{case}}
\newcommand{\bcl}{\begin{claim}}
\newcommand{\beq}{\begin{equation}}

\newcommand{\eeq}{\end{equation}}
\newcommand{\ecl}{\end{claim}}
\newcommand{\eca}{\end{case}}
\newcommand{\econ}{\end{construction}}
\newcommand{\enot}{\end{notation}}
\newcommand{\esrem}{\end{smallremark}}
\newcommand{\eno}{\end{noname}}
\newcommand{\eex}{\end{example}}
\newcommand{\eitem}{\end{itemize}}
\newcommand{\ethm}{\end{theorem}}
\newcommand{\eprop}{\end{proposition}}
\newcommand{\elem}{\end{lemma}}
\newcommand{\ecor}{\end{corrolary}}
\newcommand{\erem}{\end{remark}}
\newcommand{\edfn}{\end{definition}}
\newcommand{\benum}{\begin{enumerate}}
\newcommand{\eenum}{\end{enumerate}}

\newcommand{\wt}{\widetilde}
\newcommand{\cal}[1]{\mathcal{#1}}

\newcommand{\ds}{\displaystyle}

\def\8{\infty}
\def\.{\cdot}
\def\PP{\mathbb{P}}

\def\C{\mathbb{C}}

\def\Q{\mathbb{Q}}

\def\E{\widehat{E}}

\def\:{\colon}

\def\ssk{\smallskip}
\def\bsk{\bigskip}
\newcommand{\noin}{\noindent}
\def\Bk{\operatorname{Bk}}

\def\rk{\operatorname{rk}}

\def\Supp{\operatorname{Supp}}
\def\Pic{\operatorname{Pic}}
\def\dim{\operatorname{dim}}
\def\Exc{\operatorname{Exc}}

\def\Spec{\operatorname{Spec}}
\def\NS{\operatorname{NS}}

\def\core{\operatorname{core}}
\def\ind{\operatorname{ind}}
\def\vc{\operatorname{vc}}
\def\tip{\operatorname{tip}}

\ifpdf \usepackage[linkbordercolor={0 0 1}]{hyperref} \else \usepackage[hypertex,linkbordercolor={0 0 1}]{hyperref} \fi
\newcommand{\red}[1]{}

\begin{document}

\title[Cuspidal curves, minimal models and Zaidenberg's Finiteness Conjecture]{Cuspidal curves, minimal models and\\ Zaidenberg's Finiteness Conjecture}

\author[Karol Palka]{Karol Palka}
\address{Karol Palka: Institute of Mathematics, Polish Academy of Sciences, ul. \'{S}niadeckich 8, 00-656 Warsaw, Poland}

\thanks{The author was supported by the National Science Center, Grant No.\ 2012/05/D/ST1/03227 and by the Foundation for Polish Science within the Homing Plus programme, cofinanced by the European Union, Regional Development Fund.}

\email{palka@impan.pl}
\subjclass[2000]{Primary: 14H50; Secondary: 14J17, 14R05}
\keywords{Cuspidal curve, rigidity conjecture, Finiteness Conjecture, Cremona transformation, log Minimal Model Program, minimal model}

\begin{abstract}
  Let $E\subseteq \PP^2$ be a complex rational cuspidal curve and let $(X,D)\to (\PP^2,E)$ be the minimal log resolution of singularities. We prove that $\bar E$ has at most six cusps and we establish an effective version of the Zaidenberg Finiteness Conjecture (1994) concerning Eisenbud-Neumann diagrams of $E$. This is done by analysing the Minimal Model Program run for the pair $(X,\frac{1}{2}D)$. Namely, we show that $\PP^2\setminus E$ is $\C^{**}$-fibred or for the log resolution of the minimal model the Picard rank, the number of boundary components and their self-intersections are bounded.
\end{abstract}

\begin{center} \red{CORRECT 'structural'} \end{center}

\maketitle

\section{Main results}\label{sec:main result}

Let $\bar E\subseteq \PP^2$ be a complex rational curve having only locally analytically irreducible singularities (cusps\footnote{Cusps should not been confused with 'simple' or 'ordinary cusps', which are locally isomorphic to the singular point of $x^2-y^3$ at $0\in \Spec \C[x,y]$.}). It is an image of a morphism $\PP^1\to \PP^2$ which is \mbox{1-1} on closed points. Classifying such curves up to a choice of coordinates on $\PP^2$ is a long-standing open problem with many connections, see \cite{MaSa-cusp}, \cite{Orevkov}, \cite{OrZa_cusp}, \cite{FLZa-cusp}, \cite{Fenske_cusp}, \cite{Tono-on_the_number_of_cusps}, \cite{FLMN}, \cite{BoroLivi-HeegaardFloer_and_cusps}. We shall further assume that $\kappa(\PP^2\setminus \bar E)=2$, otherwise there is a classification. Many conjectures have been made. For instance, the Coolidge-Nagata conjecture predicts that $\bar E$ is Cremona equivalent to a line. The weak rigidity conjecture of Flenner-Zaidenberg states that if $(X,D)\to (\PP^2,\bar E)$ is the minimal log resolution of singularities then the Euler characteristic of the logarithmic tangent sheaf of $(X,D)$ vanishes. The first conjecture is equivalent to the vanishing of $h^0(2K_X+E)$, where $E$ is the proper transform of $\bar E$ on $X$, and the second to the vanishing of $h^0(2K_X+D)$ (see \ref{con:conjectures}).

Suggested by these equivalences, in this article we show that in understanding the geometry of cuspidal curves the fundamental role is played by the divisor $K_X+\frac{1}{2}D$. We propose a new approach to the above problems based on the analysis of the Minimal Model Program run for the pair $(X,\frac{1}{2}D)$; see \ref{rem:strategy} for the discussion of the strategy. The key result is the description of the structure of a minimal model and its minimal log resolution, an \emph{almost minimal model}; see \ref{def:psi_and_models} for definitions and \ref{thm:MAIN1detailed} for a more detailed version of the result.

We write $b_i(-)$ for $\dim H^i(-,\Q)$ (the $i$-th Betti number) and $p_a$ for the arithmetic genus. Put $\kappa_{1/2}(\PP^2,\bar E):=\kappa(K_X+\frac{1}{2}D)$ and $p_2(\PP^2,\bar E):=h^0(2K_X+D)$. A \emph{$\C^{**}$-fibration} is a fibration whose general fiber is isomorphic to $\C^{**}=\C^1\setminus\{0,1\}$.

\bthm\label{thm:MAIN1} Let $\bar E\subseteq \PP^2$ be a complex rational cuspidal curve for which $\PP^2\setminus \bar E$ is of log general type and let $(X,D)\to (\PP^2,\bar E)$ be the minimal log resolution of singularities. Let $(X',\frac{1}{2}D')$ and $(Y,\frac{1}{2}D_Y)$ denote respectively an almost minimal and a minimal model as defined in \ref{def:psi_and_models}. Put $n=p_a(D')$. Then: \benum[(1)]

\item $X'\setminus D'\cong Y\setminus D_Y$ is isomorphic to an open subset of $\PP^2\setminus \bar E\cong X\setminus D$ with the complement being a sum of $n$ disjoint curves isomorphic to $\C^*$.

\item $n+p_2(\PP^2,\bar E)\leq 5$.

\item If $\kappa_{1/2}(\PP^2,\bar E)=-\8$ then one of the following holds: \benum[(a)]
    \item $Y$ admits a $\PP^1$-fibration with irreducible fibers inducing a $\C^{**}$-fibration on $\PP^2\setminus\bar E$.
    \item $(Y,\frac{1}{2}D_Y)$ is an open log terminal log del Pezzo surface of rank $1$, whose boundary has $n+1\leq 6$ components.
     \eenum

\item If $\PP^2\setminus \bar E$ does not admit a $\C^{**}$-fibration then $$\rho(X')\leq p_2(\PP^2,\bar E)+19-n.$$

    \eenum\ethm

If a $\C^{**}$-fibration of $\PP^2\setminus\bar E$ is induced from a $\PP^1$-fibration of some surface of Picard rank $1$ (as above) then we call it a \emph{structural} $\C^{**}$-fibration. \red{CORRECT: no base points.}

Tono \cite{Tono-on_the_number_of_cusps} bounded the number of maximal twigs of $D$ (for a definition of a twig see Sec.\ \ref{ssec:open surfaces}) by $17-p_2(\PP^2,\bar E)$. However, neither the number of components of $D$ contained in these twigs can be bounded from above nor their self-intersections can be bounded from below. Indeed, it is already so for the family of tricuspidal curves found by Flenner-Zaidenberg \cite[3.5]{FLZa-cusp}. Surprisingly, the situation changes after arriving at an almost minimal model.
\bthm\label{thm:MAIN2} Let $\bar E\subseteq \PP^2$ be a complex rational cuspidal curve, such that $\PP^2\setminus \bar E$ is of log general type and does not admit a structural $\C^{**}$-fibration. Let $(X',\frac{1}{2}D')$ be an almost minimal model as above. Then: \benum[(1)]

    \item $\rho(X')\leq 22,\ \#D'\leq 22 \text{\ \ and\ \ }K_{X'}\cdot D'\leq 16.$ The inequalities are strict in case $\kappa_{1/2}(\PP^2,\bar E)=-\8$.

    \item If $\kappa_{1/2}(\PP^2,\bar E)=-\8$ then the self-intersections of components of $D'$ are not smaller than $(-3)$. If $\kappa_{1/2}(\PP^2,\bar E)\geq 0$ then they are between $-40$ and $6$.
\eenum \ethm

\bsrem To see an immediate application of the above result assume  $\PP^2\setminus \bar E$ contains no $\C^*$. Then $n=0$, so $(X,D)=(X',D')$ and the bounds above are actually bounds on the log resolution, so we may in fact classify rational cuspidal curves for which $\PP^2\setminus \bar E$ contains no $\C^*$. In Corollary \ref{thm:MAIN3} below we show how to get around the hypothesis $n=0$ to obtain some unconditional bounds on the geometry of $D$.\esrem

Each reduced effective divisor can be decomposed as a sum of its maximal rational twigs plus the remaining part, called the \emph{core}. The above theorem coupled with an analysis of structural $\C^{**}$-fibrations in Section \ref{sec:Cstst} establishes an effective version of the Zaidenberg Finiteness Conjecture for cuspidal curves (see \ref{con:conjectures}(4) for the conjecture and \ref{def:core_and_EN-diagram} for the definition of the core graph).

\bcor\label{thm:MAIN3}(Effective Zaidenberg Finiteness Conjecture). Let $\bar E \subseteq \PP^2$ be a rational cuspidal curve whose complement is of log general type and let $(X,D)\to (\PP^2,\bar E)$ be the minimal log resolution of singularities. The core of $D$ has at most $20$ components and the core graph (hence the Eisenbud-Neumann diagram) has at most $31$ vertices. In particular, the sets of possible core graphs and Eisenbud-Neumann diagrams are finite.
\ecor

The following inequalities are of independent interest.

\bthm\label{thm:MAIN4} Let $\bar E\subset \PP^2$ be a rational cuspidal curve contained in the complex projective plane. Then $\bar E$ has at most six cusps. In fact, if $c$ is the number of cusps and $n$ is as above then $c+\max(0,2p_2(\PP^2,\bar E)+n-4,2p_2(\PP^2,\bar E)-3)\leq 6$. \ethm

Up to now the best known bound was $c\leq  8.5-\frac{1}{2}p_2(\PP^2,\bar E)<9$ \cite{Tono-on_the_number_of_cusps}. The bound $c\leq 8$ was obtained earlier in \cite{OrZa_cusp} provided $h^1(\cal T_X(-\log D))=0$. It was conjectured by Orevkov that in fact $c\leq 4$.

\bsk The tools built in this article play a basic role in our recent proof of the Coolidge-Nagata conjecture \cite{Palka-Coolidge_Nagata1}, \cite{Coolige-Nagata2}. They can be applied to study open surfaces of log general type.

\bsk\textsl{\textsf{Acknowledgements.}} The author is grateful to Mariusz Koras for discussions concerning producing $\C^*$'s contained in the affine part of the surface. His remark to reuse the BMY inequality at the final stage of the proof of Theorem \ref{thm:MAIN1}(4) (instead of using the bound obtained by Tono) led to improved numerical bounds in the inequalities. The author would also thank Mikhail Zaidenberg for discussing results in the literature.

\tableofcontents

\section{Preliminaries}\label{sec:preliminaries}

We recall some results from the theory of non-complete surfaces, also to settle notation. For a complete treatment the reader is referred to \cite{Miyan-OpenSurf}.

\subsection{Log surfaces and divisors}\label{ssec:open surfaces}
Given two $\Q$-divisors $T, T'$ we say that $T'$ is a \emph{subdivisor of $T$} ($T'\leq T$) if $T-T'$ is effective.  Let $T$ be a nonzero reduced divisor on a smooth complete surface $X$. If $R$ is a reduced subdivisor of $T$ we define $\beta_T(R)=R\cdot (T-R)$ and we call it a \emph{branching number of $R$ in $T$}. If $R$ is irreducible and nonzero we say that $R$ is a \emph{tip} or a \emph{branching component} if $\beta_T(R)\leq 1$ or $\beta_T(R)\geq 3$ respectively. We say that $T$ is an snc-divisor if its irreducible components are smooth and intersect transversally, at most two in one point. It is \emph{snc-minimal} if a contraction of any $(-1)$-curve in $T$ maps $T$ onto a divisor which is not snc. We call $T$ a \emph{chain} if it is a connected snc-divisor with linear dual graph. A (rational) \emph{twig} of $T$ is a (rational) snc-chain $R\leq T$ which contains no branching components of $T$ and contains a tip of $T$. A twig is \emph{maximal} if it is not properly contained in some other twig. Similarly maximal rational twigs are maximal in the sense of inclusion of supports in the set of rational twigs. We say that $T$ is a \emph{(rational) tree} if it is a connected snc-divisor (whose all components are rational and) such that the dual graph of $T$ contains no loops. A fork is a tree with exactly one branching component and three maximal twigs. The \emph{arithmetic genus of $T$} is $$p_a(T)=\frac{1}{2}T\cdot (K+T)+1,$$ where $K$ is the canonical divisor (class) on $X$. We often use the fact that for a rational tree $p_a(T)=0$. We denote the Iitaka-Kodaira dimension of the divisor $T$ (on $X$) by $\kappa(T)$. If $T=T_1+\ldots+T_k$ is a decomposition of a rational chain into irreducible components, such that $T_i\cdot T_{i+1}=1$ for $i<k$, then we write $T=[-T_1^2,-T_2^2,\ldots,-T_k^2].$ By $(m)_p$ we mean a sequence $(m,m,\ldots,m)$ of length $p$. By a curve we mean a one-dimensional variety. An \emph{$(n)$-curve} is a smooth rational curve with self-intersection $n$. A $(-1)$-curve which is a component of $T$ as above is called \emph{superfluous} if it intersects at most two other components of $T$, each at most once and transversally. A $(-2)$-twig is a twig such that all its irreducible components are $(-2)$-curves. A maximal $(-2)$-twig is a $(-2)$-twig which is not a proper subdivisor of another $(-2)$-twig. We define the \emph{discriminant of $T$} as $d(T)=\det(-Q(T))$, where $Q(T)$ is the intersection matrix of $T$. We put $d(0)=1$.

Assume now that $T$ is a fixed connected snc-divisor with no superfluous $(-1)$-curves and with intersection matrix which is not negative definite. (This is the case when $T$ is an snc-minimal boundary of an affine surface). Assume also that the intersection matrices of all its maximal twigs are negative definite (this is the case if $\kappa(K+T)=2$, \cite[6.13]{Fujita-noncomplete_surfaces}). For every twig $R$ of $T$ we define $$\ind (R)=\frac{d(R-\tip(R))}{d(R)} \text{\ \ and\ \ } \delta(R)=\frac{1}{d(R)},$$ where $\tip(R)$ denotes the unique tip of $T$ contained in $R$. The former number is usually called the \emph{inductance} or \emph{capacity} of $R$. We define $\Bk_T R$, the \emph{bark of $R$ with respect to $T$}, as the unique $\Q$-divisor supported on $\Supp R$, such that $$\Bk_T R\cdot R_0=\beta_T(R_0)-2$$ for every component $R_0$ of $R$, equivalently that $\Bk_T R\cdot R_0$ equals $-1$ if $R_0=\tip(R)$ and equals $0$ otherwise. Let now $T_i$, $i=1,\ldots,t$, be the maximal twigs of $T$. We put $$\ind (T)=\sum_{i=1}^t \ind (T_i)\text{\ \ and \ } \delta(T)=\sum_{i=1}^t\delta(T_i) ,$$ and we define the \emph{bark of $T$} ad $\Bk T=\Bk_T T_1+\ldots+\Bk_T T_t$.

If $\kappa(K+T)\geq 0$ then we have the Fujita-Zariski decomposition $$K+T=(K+T)^++(K+T)^-,$$ where $(K+T)^+$ is numerically effective and $(K+T)^-$ is effective, either empty or having a negative definite intersection matrix. Moreover, $(K+T)^+\cdot B=0$ for any curve $B$ contained in $\Supp (K+T)^-$.

\blem\label{lem:Bk} Let $T_i$ be a twig of a rational tree $T$ as above and let $T_0$ be a component of $T_i$. Denote the coefficient of $T_0$ in $\Bk T$ by $t_0$. Then \benum[(i)]

\item $0<t_0<1$.

\item If $T_0$ is the component meeting $T-T_i$ then $t_0=\delta(T_i)$.

\item $(\Bk T)^2=-\ind (T)$.

\item If there is no $(-1)$-curve $A$ on $X$, for which $T\cdot A\leq 1$, then $(K+T)^-=\Bk T$.

\eenum \elem

\begin{proof} Write $T_i=T_{i,1}+T_{i,2}+\ldots+T_{i,k_i}$, where $T_{i,j}$ are irreducible and $T_{i,j}\cdot T_{i,j+1}=1$ for $j<k_i$. Then by \cite[2.3.3.4]{Miyan-OpenSurf} the coefficient of $T_{i,j}$ in $\Bk T$ equals $d(T_{i,j+1}+\ldots+T_{i,k_i})/d(T_i)$. This gives (i), (ii) and (iii). Part (iv) follows from 2.3.11 loc.\ cit.\end{proof}

Let $\sigma\:(X',D')\to (X,D)$, where $D'=\sigma^{-1}_*D+\Exc \sigma$, be a blowup with a center on $D$ ($\sigma_*^{-1}$, or rather $(\sigma^{-1})_*$, denotes taking the proper transform). Write $\sigma^*D=D'+\mu\Exc \sigma.$ We say that $\sigma$ is \emph{inner (outer)} for $D$ if $\mu=1$ ($\mu=0$ respectively). If $D$ has smooth components we may equivalently ask that the center of $\sigma$ belongs to exactly two (one) components of $D$.

By a \emph{log surface} we mean a pair $(Y,B)$ consisting of a projective normal surface $Y$ together with an effective $\Q$-divisor, which can be written as $B=\sum b_iB_i$, where $B_i$ are distinct irreducible components and $0<b_i\leq 1$. It is (log) smooth if $X$ is smooth and $B$ is an snc-divisor.

\bdfn\label{def:resolution} Let $(Y,B)$ be a log surface and let $\pi\:(X,D)\to (Y,B)$ be a proper birational morphism from a log surface such that $X$ is smooth and $D=\pi_*^{-1}B+\Exc \pi$. We say that $\pi$ is a \emph{weak (embedded) resolution of singularities} if $\pi_*^{-1}B$ (the proper transform) is an snc-divisor. It is a \emph{log resolution} if $D$ is an snc-divisor (equivalently, $(X,D)$ is a smooth log surface).\edfn

\bsrem The role of maximal twigs and barks in the theory of log surfaces can be seen easily from the point of view of the logarithmic Minimal Model Program. For example, with the above assumptions and notation by the adjunction formula $$(K_X+T)\cdot T_0=\beta_{T}(T_0)-2<0,$$ so the tip $\tip(T_i)$, and by induction the whole maximal twig $T_i$, is contracted in the process of minimalization of $(X,T)$. Now if $\alpha\:(X,T)\to (X',\alpha_*T)$ is the contraction of maximal twigs then $$\alpha^*(K_{X'}+\alpha_*T)=K_X+T-\Bk T.$$ \esrem

Recall that by definition components of a twig of any divisor are smooth.

\bdfn\label{def:core_and_EN-diagram} Let $B=\sum B_i$ be a reduced effective divisor on a smooth projective surface. Assume $B$ has a connected support. \benum[(i)]

\item The \emph{(weighted) dual graph} of $B$ is a graph with a vertex (of weight $B_i^2$) for each irreducible component of $B_i$ and $B_i\cdot B_j$ edges between $B_i$ and $B_j$ for $i\neq j$.

\item The \emph{core} of $B$ is the divisor remaining after removing all rational twigs of $B$.

\item The \emph{core graph} of $B$ is the graph resulting from the contraction of vertices of the dual graph of $D$ which have degree $2$ and correspond to components contained in some rational twig of the graph.

\item Assume $B$ is a tree. The (non-weighted) \emph{Eisenbud-Neumann diagram} of $B$ is the graph resulting from contraction of vertices of the dual graph of $B$ of degree $2$ which correspond to rational components.

\eenum\edfn

The core graph contains the dual graph of the core and remembers to which components of the core maximal rational twigs were attached (vertices of degree $1$ are not contracted). Note that even if $B$ is an exceptional divisor of a minimal log resolution of a cusp (and hence is a three-valent tree of special type), the core graph is in general bigger than the Eisenbud-Neumann diagram.

\subsection{Rational cuspidal curves}\label{ssec:cuspidal_curves}

For a rational curve $\bar E\subseteq \PP^2$ we put $\kappa_{1/2}(\PP^2,\bar E)=\kappa(K_X+\frac{1}{2}D)$ and $p_2(\PP^2,\bar E)=h^0(2K+D)$, where $(X,D)\to (\PP^2,\bar E)$ is the minimal log resolution.

\blem\label{lem:cuspidal_of_gt_and_khalf} Let $\bar E\subseteq \PP^2$ be a rational cuspidal curve and let $\pi\:(X,D) \to (\PP^2,\bar E)$ be any weak resolution of singularities. Then: \benum[(i)]

\item $\PP^2\setminus\bar E$ is $\Q$-acyclic.

\item The numbers $h^0(m(2K_X+D))$, and hence $\kappa_{1/2}(\PP^2,\bar E)$ and $h^0(\PP^2,\bar E)$, do not depend on the choice of $\pi$.

\item If $\kappa_{1/2}(\PP^2,\bar E)\geq 0$ then for a fiber $f$ of any $\PP^1$-fibration of $X$ we have $f\cdot D\geq 4$.

\item If $\PP^2\setminus \bar E$ is of log general type then it contains no topologically contractible curves (i.e.\ curves which are homotopic to a point). In case $\pi$ is the minimal log resolution we have $(K+D)^-=\Bk D$.

\eenum \elem

\begin{proof} (i) The $\Q$-acyclicity follows from the Lefschetz duality.

(ii)  Let $\sigma\:X'\to X$ be a blowup and let $D'=\sigma_*^{-1}D+\Exc \sigma$. Let $\mu$ be the number of components of $D$ passing through the center of $\sigma$. Denote the proper transform of $\bar E$ on $X$ by $E$. The divisor $D-E$ is snc and $E$ is smooth, so $\mu\leq 3$. Clearly, $\sigma_*$ embeds the linear system of $m(2K_{X'}+D')$ into the linear system of $$\sigma_*(m(2K_{X'}+D'))=m(2K_X+D).$$ Now if $m(2K_X+D)\sim U$ then $$m(2K_{X'}+D')\sim \sigma^*U+m(3-\mu)\Exc \sigma\geq \sigma^*U,$$ so $\sigma_*$ is surjective.

(iii) Let $f$ be a smooth fiber of a $\PP^1$-fibration of $X$. By (ii) $\kappa(2K_X+D)\geq 0$, so $0\leq f\cdot (2K_X+D)=-4+f\cdot D$.

(iv) By \cite{MiTs-lines_on_qhp} $\PP^2\setminus\bar E$ contains no topologically contractible curves. If $\pi$ is the minimal log resolution then by \ref{lem:Bk}(iv) $(K_X+D)^-=\Bk D$.
\end{proof}

There are many conjectures related to cuspidal curves (see \cite{FLMN}), we mention only those related to our results. Recall that $\bar E\subseteq \PP^2$ is \emph{projectively rigid} if every embedded equisingular deformation of it is projectively equivalent to $\bar E$.

\begin{conjecture}\label{con:conjectures} Let $\bar E\subseteq \PP^2$ be a rational cuspidal curve and let $\pi\:(X,D)\to (\PP^2,\bar E)$ be the minimal log resolution of singularities. Denote the proper transform of $\bar E$ on $X$ by $E$ and the logarithmic tangent sheaf of $X$ along $D$ by $\cal T_X(-\log D)$. We have the following conjectures: \benum[(1)]
\item (The rigidity conjecture, \cite{FZ-deformations}, \cite{FLZa-cusp}). If $\bar E\subseteq \PP^2$ is of log general type then it is projectively rigid and has unobstructed deformations. Equivalently, $h^i(\cal T_X(-\log D))=0$ for $i=1,2$.
\item (The weak rigidity conjecture, \cite{FLZa-cusp}). If $\bar E\subseteq \PP^2$ is of log general type then the Euler characteristic of $\cal T_X(-\log D)$ vanishes. Equivalently, $K_X\cdot(K_X+D)=0$. Equivalently (see \ref{lem:properties_of_(Xi,Di)}(i)), $p_2(\PP^2,\bar E)=0$.
\item (The Coolidge-Nagata conjecture, \cite{Coolidge}, \cite{Nagata}). There exists a Cremona transformation of $\PP^2$ which maps $\bar E$ onto a line. Equivalently (\cite[2.6, 3.4]{Kumar-Murthy}, \cite{Coolidge}), $h^0(2K_X+E)=0$.
\item (Finiteness conjecture, \cite[1.6, p.16]{Zaid-open_MONTREAL_problems},\cite[\S 15]{open_problems_AAG}). The set of possible Eisenbud-Neumann diagrams of $D$ is finite.
\eenum\end{conjecture}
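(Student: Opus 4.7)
The statement collects four longstanding conjectures on rational cuspidal curves. Part (4) is proved in effective form as Corollary \ref{thm:MAIN3} via the structural results of Theorems \ref{thm:MAIN1}--\ref{thm:MAIN2}; parts (1)--(3) remain open in full generality, but each contains an ``Equivalently'' clause that is provable with elementary methods. The plan is to address the equivalences and then sketch how the effective version of (4) follows from the main theorems.

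For (1), the equivalence comes from standard logarithmic Kodaira-Spencer theory applied to the snc pair $(X,D)$: first-order embedded equisingular deformations of $\bar E$ modulo the action of the connected group $\Aut(\PP^2)$ are classified by $H^1(X,\cal T_X(-\log D))$, while their obstructions lie in $H^2(X,\cal T_X(-\log D))$. Hence projective rigidity is equivalent to $H^1=0$ and unobstructedness of the equisingular deformation functor to $H^2=0$. For (2), Riemann-Roch on $2K_X+D$ reads $\chi(2K_X+D)=1+\tfrac12(2K_X+D)\cdot(K_X+D)=K_X\cdot(K_X+D)$, using $p_a(D)=0$ (as $D$ is a tree of rational curves) and $\chi(\cal O_X)=1$. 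The log general type hypothesis and Serre duality give $h^2(2K_X+D)=h^0(-K_X-D)=0$, so $p_2=K_X\cdot(K_X+D)+h^1(2K_X+D)$; combined with a Kawamata-Viehweg-type vanishing for $h^1(K_X+(K_X+D))$ applied after identifying $(K_X+D)^-=\Bk D$ via Lemma \ref{lem:cuspidal_of_gt_and_khalf}(iv), one obtains $p_2=K_X\cdot(K_X+D)$. The third quantity, $\chi(\cal T_X(-\log D))$, is computed through the log tangent sequence $0\to\cal T_X(-\log D)\to\cal T_X\to\bigoplus_i\cal O_{D_i}(D_i)\to 0$ together with Noether's formula and the $\Q$-acyclicity of $\PP^2\setminus\bar E$ (Lemma \ref{lem:cuspidal_of_gt_and_khalf}(i)), yielding $\chi(\cal T_X(-\log D))=-K_X\cdot(K_X+D)$ and closing the chain of equivalences.

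For (3), I would appeal to the Kumar-Murthy criterion cited in the statement: the forward direction is immediate, since a line $L$ satisfies $h^0(2K_{\PP^2}+L)=h^0(\cal O_{\PP^2}(-5))=0$ and this quantity is invariant under the Cremona moves comparing the pairs $(\PP^2,L)$ and $(\PP^2,\bar E)$, while the converse uses a nonzero section of $2K_X+E$ to run a descending induction contracting $(-1)$-curves on smooth models dominating both $\PP^2$ and the target. For the effective form of (4), the route the paper will follow is: Theorem \ref{thm:MAIN1} produces an almost minimal model $(X',\tfrac12 D')$ that is either $\C^{**}$-fibred or has bounded complexity, Theorem \ref{thm:MAIN2} quantifies the latter, the structural $\C^{**}$-fibred alternative is handled separately in Section \ref{sec:Cstst}, and the bounds on $(X',D')$ translate back to bounds on the core graph of $D$ because the $n$ disjoint $\C^*$'s in $X\setminus D$ lost when passing to $X'\setminus D'$ are controllable by Theorem \ref{thm:MAIN1}(1).

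The main obstacle is what separates the equivalences from the conjectures themselves: vanishing theorems and logarithmic deformation theory reduce each assertion to a concrete numerical claim on the pair $(X,D)$, but one still has to show that every cuspidal curve of log general type actually satisfies that claim. This is precisely the content of Theorems \ref{thm:MAIN1}--\ref{thm:MAIN4}, and the hardest step is the control of the MMP of $(X,\tfrac12 D)$ producing an almost minimal model satisfying the strong bounds of Theorem \ref{thm:MAIN2}; both the effective finiteness (4) here and the Coolidge-Nagata conjecture (3) in the author's follow-up work rest on this control.
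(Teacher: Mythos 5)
This statement is a \emph{conjecture}, not a theorem, so the paper offers no proof of it to compare against; the only parts the paper actually establishes are the equivalence $p_2(\PP^2,\bar E)=K_X\cdot(K_X+D)$ (Lemma \ref{lem:properties_of_(Xi,Di)}(i), via exactly the Riemann--Roch plus Kawamata--Viehweg argument you sketch, using $(K_X+D)^-=\Bk D$) and the effective form of (4) (Corollary \ref{thm:MAIN3}, obtained through Theorems \ref{thm:MAIN1}--\ref{thm:MAIN2} and the structural $\C^{**}$-fibration analysis of Section \ref{sec:Cstst}, which is the route you describe). Your identification of (1)--(3) as open, your citation-level sketches of the remaining equivalences (which the paper itself only attributes to the literature), and your account of how the effective finiteness in (4) is derived all agree with the paper, so there is nothing to correct.
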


Note that the equivalent formulations of conjectures (1) and (2) show that the weak rigidity conjecture implies the Coolidge-Nagata conjecture when $\PP^2\setminus\bar E$ is of log general type. In \ref{con:k_half_vanishes} we formulate an even stronger conjecture, which is more natural in our setting. The mentioned equivalent formulations show also that our choice of the setup in which we run the log MMP (the $\frac{1}{2}$ in $K_X+\frac{1}{2}D$) is the most natural one.

\bprop\label{prop:Fujita_boundary_excluded} If $\PP^2\setminus \bar E$ is not of log general type then it is $\C^1$- or $\C^*$-fibered and $\kappa_{1/2}(\PP^2,\bar E)=-\8$. In particular, $p_2(\PP^2,\bar E)=0$ and $\bar E$ satisfies the Coolidge-Nagata conjecture. \eprop

\begin{proof} Assume $S=\PP^2\setminus\bar E$ is not of long general type. Suppose it is neither $\C^1$- nor $\C^*$-ruled. By structure theorems for smooth affine surfaces (see \cite[3.1.3.1, 3.1.7.1]{Miyan-OpenSurf}) $\kappa(S)=0$. Let $(X,D)$ be the minimal log resolution of $(\PP^2,\bar E)$. Since $S$ is not $\C^*$-fibered, by \cite[3.4.4.3]{Miyan-OpenSurf} $(X,D)$ is almost minimal. By \cite[8.64]{Fujita-noncomplete_surfaces} (cf. \cite[3.4.4.2]{Miyan-OpenSurf}) $\PP^2\setminus \bar E$ is one of the three Fujita surfaces $Y\{a,b,c\}$, so $D$ is a fork whose maximal twigs are $(-2)$-chains. But then $c=1$ and there should exist a tip of $D$ (namely $E$) for which $D-E$ is negative definite. This is not so for the surfaces $Y\{a,b,c\}$, because the branching components have self-intersections $1,0,-1$ respectively; a contradiction.

Thus $S$ is $\C^1$- or $\C^*$-fibred. The fibration extends to a $\PP^1$-fibration of some weak resolution $(X,D)\to (\PP^2,\bar E)$, so that $f\cdot D\leq 2$. But then $\kappa_{1/2}(\PP^2,\bar E)=\kappa(K_X+\frac{1}{2}D)=-\8$, because $f\cdot(K_X+\frac{1}{2}D)<0$. It follows that $p_2(\PP^2,\bar E)=h^0(2K_X+E)=0$, so we are done due to \ref{lem:cuspidal_of_gt_and_khalf}(iii).
\end{proof}

\section{Minimal models: construction}\label{sec:logMMP}

From now on we assume that $\bar E\subseteq \PP^2$ is a (closed) singular cuspidal rational curve in $\PP^2$ and that $\pi_0\:(X_0,D_0)\to (\PP^2,\bar E)$ is the minimal weak (see \ref{def:resolution}) resolution of singularities. Let $\pi\:(X,D)\to (\PP^2,\bar E)$ be the minimal log resolution. Clearly, we have a birational factorizing morphism $\psi_0\:(X,D)\to (X_0,D_0)$. Let $E_0$ and $E$ be the proper transforms of $\bar E$ on $X_0$ and $X$ respectively. If $\PP^2\setminus\bar E$ is not of log general type then by \ref{prop:Fujita_boundary_excluded} it is $\C^1$- or $\C^*$-fibered, $\kappa_{1/2}(\PP^2,\bar E)=-\8$ and $\bar E$ satisfies the Coolidge-Nagata conjecture. Cuspidal curves of this type are classified (see \cite[5.2]{FLMN} and references there). In fact one can derive the classification from what is known on $\C^1$- and $\C^*$-fibered $\Q$-homology planes (see \cite{MiSu-Qhp_quotient_singularities}). Therefore, we may and shall assume that $\PP^2\setminus \bar E$ is of log general type.

\brem[The strategy]\label{rem:strategy} Our basic idea is to run the logarithmic minimal model program for $(X,\frac{1}{2}D)$ (or rather for $(X_0,\frac{1}{2}D_0)$, but this is just a technical detail). At this point we should remark that one can of course run the log MMP in the usual way, i.e.\ for $(X,D)$. But because $X\setminus D$ is a smooth affine surface of log general type which has Euler characteristic $1$, it cannot contain lines (see \cite[3.4.10.1]{Miyan-OpenSurf}), so only the rays supported in $D$ get contracted. Unfortunately, although the process of minimalization in this case does not contract any curve in the affine part, the nefness of the log canonical divisor of the minimal model gives rather weak consequences. On the other hand, as we will see, running the log MMP for $(X,\frac{1}{2}D)$ leads to contracting curves in the affine part, but now the nefness of the log canonical divisor or the log Mori space structure of the minimal model give strong bounds. Crucially, if the open part of the surface is not $\C^{**}$-fibered then $K_Y+\frac{1}{2}D_Y$ is nef or anti-ample, so its square is non-negative (see \ref{thm:MAIN1detailed}(6) for the resulting inequality). \erem

Since $(\Pic \PP^2)\otimes \Q\cong \Q$ is generated by $\bar E$, $(\Pic X)\otimes \Q$ is generated freely by the components of $D$. Because $D-E$ has a negative definite intersection matrix, the intersection matrix of $D$ is not negative definite by the Hodge index theorem. Minimality of the log resolution $\pi\:(X,D)\to (\PP^2,\bar E)$ implies that all $(-1)$-curves of $D$ are branching. Let $Q_i$ be the reduced exceptional divisor over the cusp $q_i$. It is a rational tree with a negative definite intersection matrix. There is a unique $(-1)$-curve in $Q_i$, say $L_i$, and it is contained in a maximal twig of $Q_i$. We have $E\cdot Q_i=E\cdot L_i=1$. Because $\pi$ is minimal, $L_i$ is not a tip of $D$, so $Q_i-L_i$ has two connected components. One of them is a rational chain and the other a rational tree; $L_i$ meets them in tips. Because $Q_i$ contracts to a smooth point, it is in fact a three-valent tree with $d(Q_i)=d([1])=1$. Furthermore, $Q_i$ can be seen as being produced by a \emph{connected sequence of blow-ups}, i.e.\ we can decompose the morphism contracting it to a point into a sequence of blow-ups $\sigma_1\circ\ldots\circ\sigma_s$, so that then the center of $\sigma_{i+1}$ belongs to the exceptional component of $\sigma_i$ for $i\geq 1$. The Eisenbud-Neumann diagram of $Q_j$ is:

$$ \xymatrix{ {\circ}\ar@{-}[r] &{\circ}\ar@{-}[r]\ar@{-}[d] &{\circ}\ar@{-}[r]\ar@{-}[d] &{\ldots}\ar@{-}[r] &{\circ}\ar@{-}[r]\ar@{-}[d] &{\circ} \\ {} &{\circ} &{\circ} &{\ldots} &{\circ} & {} }$$

\vskip 0.3cm

Now we analyze the logarithmic Minimal Model Program run for the log surface $(X_0,\frac{1}{2}D_0)$. For basic notions and theorems of the program the reader is referred to \cite{KollarKovacs-2DlogMMP} and \cite{Matsuki, KollarMori}. Recall that if $Y$ is a normal projective surface then a morphism $\alpha\:Y\to Z$ onto a normal variety $Z$ is called a \emph{contraction} if it has connected fibers. We define $\rho(Y)=\rk \NS(Y)$, where $\NS(Y)$ is the Neron-Severi group of $Y$, and we put $\rho(\alpha)=\rho(Y/Z)=\rho(Y)-\rho(Z)$. If $D$ is an effective $\Q$-divisor on $Y$ then a contraction $\alpha\:Y\to Z$ is \emph{extremal} with respect to $K_Y+D$ if and only if $\rho(\alpha)=1$. We will use only contractions for which the contracted numerical class intersects $K_Y+D$ negatively.

\bdfn\ \benum[(i)]

\item A cusp of a planar curve is \emph{semi-ordinary} if it is locally analytically isomorphic to the singular point of $x^2=y^{2m+1}$ at $0\in\Spec \C[x,y]$ for some $m\geq 1$.

\item Let $T$ be a reduced effective divisor. A subdivisor of $T$ of type $C=[2,1,3,(2)_{m-1}]$ with $m\geq 1$, such that $T-C$ meets $C$ exactly in the $(-1)$-curve of $C$, in one point nad transversally, is a \emph{semi-ordinary ending of $T$}.
\eenum\edfn

Note that an \emph{ordinary (simple) cusp} is exactly a semi-ordinary cusp for which $m=1$. The exceptional divisor of the minimal resolution of a semi-ordinary cusp is $[2,1,3,(2)_{m-1}]$, where $(2)_{m-1}$ is a chain of $(-2)$-curves of length $m-1\geq 0$. For instance, $D$ contains a semi-ordinary ending if and only if $\bar E$ has at least one semi-ordinary cusp. It is easy to see that all components of a semi-ordinary ending of $D$ are contracted when the logarithmic Minimal Model Program is run for the log surface $(X,\frac{1}{2}D)$. On the other hand, since $E_0$ is tangent to all $(-1)$-curves of $D_0$, $D_0$ contains no semi-ordinary endings.

We now define by induction a finite sequence of birational contractions $$\psi_i\:(X_{i-1},D_{i-1})\to (X_i,D_i),\  1\leq i\leq n$$ starting from the minimal weak resolution $(X_0,D_0)$ defined above.

\bnot\label{def:Delta,Upsilon,Dflat} Assume $(X_i,D_i)$ is defined. Write $K_i$ for the canonical divisor on $X_i$. \benum[(i)]

\item Let $\Delta_i$ be the sum of all maximal $(-2)$-twigs of $D_i$.

\item Let $\Upsilon_i$ be the sum of $(-1)$-curves $L$ in $D_i$, for which either $\beta_{D_i}(L)=3$ and $L\cdot \Delta_i=1$ or $\beta_{D_i}(L)=2$ and $L$ meets exactly one component of $D_i$.

\item Decompose $\Delta_i$ as $\Delta_i=\Delta^+_i+\Delta^-_i$, where $\Delta^+_i$ consists of these $(-2)$-twigs of $D_i$ which meet $\Upsilon_i$.

\item Put $D_i^\flat=D_i-\Upsilon_i-\Delta^+_i-\Bk'\Delta^-_i$, where $\Bk' \Delta_i^-=\Bk_{D_i}(\Delta_i^-)$ (see Section \ref{sec:preliminaries}). \eenum\enot

We now denote the properties of $\psi_i$ and the pairs $(X_i,D_i)$ we will prove: \benum[\ \ ($P_1$)]

    \item $X_i$ is smooth,

    \item $D_i$ contains no semi-ordinary endings or superfluous $(-1)$-curves,

    \item $D_i-E_i$, where $E_i=\psi_{i*}E_{i-1}$, is an snc-divisor,

    \item $E_i$ is a smooth rational curve,

    \item $D_i$ is not an snc-divisor,

    \item $D_i-\Delta_i-\Upsilon_i$ is connected,

    \item Components of $\Upsilon_i$ are disjoint, each meets at most one component of $\Delta_i$.

    \item $X_i\setminus D_i$ is affine and contains no affine lines.
\eenum

\noin Note that (when proved) property $(P_7)$ implies that $b_0(\Delta_i^+)\leq \#\Upsilon_i$ and that $\Upsilon_i+\Delta_i$ can be contracted by a birational morphism.

\blem\label{lem:producing_A} Assume $(X_i,D_i)$ satisfies $(P_1)$-$(P_8)$. Let $\alpha_i\: (X_i,D_i)\to (Y_i,D_{Y_i})$ be the contraction of $\Upsilon_i+\Delta_i$. Then \benum[(i)]

    \item $\alpha_i^*(K_{Y_i}+\frac{1}{2}D_{Y_i})=K_i+\frac{1}{2}D_i^\flat$.

    \item $(Y_i,\frac{1}{2}D_{Y_i})$ has log terminal singularities (with discrepancies $\geq -\frac{1}{2}$).

    \item If $\theta\:{Y_i}\to Z$ is a birational extremal contraction, negative with respect to $K_{Y_i}+\frac{1}{2}D_{Y_i}$, then the contracted curve is not a component of $D_{Y_i}$.

\eenum \elem

\begin{proof} (i) Since $\Upsilon_i+\Delta_i$ is contracted by $\alpha_i$, it has a negative definite intersection matrix, so the divisor $\alpha^*(K_{Y_i}+\frac{1}{2}D_{Y_i})-K_i$ is determined uniquely by the fact that, by the projection formula, it intersects trivially with every component of $\Upsilon_i+\Delta_i$. But it follows from the definition of $D_i^\flat$ that this is the case for $\frac{1}{2}D_i^\flat$.

(ii) From (i) we see that if we write $$K_i+\alpha_{i*}^{-1}(\frac{1}{2}D_{Y_i})=\alpha_i^*(K_{Y_i}+\frac{1}{2}D_{Y_i})+\sum a_UU,$$ where the sum runs over irreducible curves $U$ contracted by $\alpha$, then the discrepancies $a_U$ are bigger than $-\frac{1}{2}$. It remains to show that singularities of $(Y_i,D_{Y_i})$ coming from tangency of some components of $D_0$ are log terminal. Let $R_1+U_1+U_2+\ldots+U_n+R_2$ be a chain of smooth rational curves on a smooth surface $V$, such that $U=U_1+U_2+\ldots+U_n$ is of type $[1,2,\ldots,2]$ and let $R_3$ be a smooth rational curve intersecting the chain in $U_1$, transversally. Put $R=R_1+R_2+R_3$. It is enough to check that for $R+U$ the contraction of $U$ creates at most log terminal singularities. We allow $R_2=0$, which corresponds to $s_j=1$ for some $j$. If $p\:V\to Y$ denotes the contraction then we get $$p^*p_*(K_V+\frac{1}{2}(R+U))=K_V+\frac{1}{2}R$$ in case $R_2=0$ and $$p^*p_*(K_V+\frac{1}{2}(R+U))=K_V+\frac{1}{2}R+U$$ in case $R_2\neq 0$, so the discrepancies are equal to $0$ and $-\frac{1}{2}$ respectively.

(iii) Suppose $B'=\Exc \theta$ is a component of $D_{Y_i}$. Let $B$ be the proper transform of $B'$ on $X_i$. Put $R=D_i-\Delta_i-\Upsilon_i$. Using the projection and adjunction formulas we get $$0>(2K_{Y_i}+D_{Y_i})\cdot B'=(2K_i+D_i^\flat)\cdot B=\beta_{R}(B)+\Delta^-_i\cdot B -4-B^2-B\cdot\Bk'\Delta^-_i.$$ By \ref{lem:Bk}(ii) if $V$ is the component of $D_i$ meeting a given twig $T$ then $V\cdot \Bk'T=\frac{1}{d(T)}\leq \frac{1}{2}$, hence $$0\leq 2 \beta_R(B)+B\cdot\Delta^-_i\leq 7+2B^2.$$ We get $B^2\geq -3$.  Since $\alpha_i(B)=B'\neq 0$, the definition of $\Delta_i^+$ shows that $B\cdot \Delta_i^+=0$.

Suppose $B^2=-3$. Then $\beta_R(B)=0$ and $B\cdot\Delta^-\leq 1$. Negative definiteness of the intersection matrix of $B+\Upsilon_i+\Delta_i$ implies that $B\cdot \Upsilon_i\leq 1$. Moreover, if $L\in \Upsilon_i$ meets $B$ and $\Delta_L$ is the connected component of $\Delta_i^+$ meeting $L$ then $\Delta_L=[2]$, so $B$ is a part of a semi-ordinary ending of $D_i$, which contradicts the assumption. It follows that $B^2=-2$ or $B^2=-1$. In both cases the definition of $\Delta_i$ together with the negative definiteness of $B+\Upsilon_i+\Delta_i$ imply that $B\cdot (\Upsilon_i+\Delta_i^+)=0$ and $B\cdot \Delta_i^-\leq 1$. We obtain $$0\leq 2 \beta_D(B)\leq B\cdot\Delta^-_i+7+2B^2.$$

Suppose $B\cdot \Delta_i^-=1$. For $B^2=-2$ we get $\beta_D(B)\leq 2$, which contradicts the definition of $\Delta_i^-$. For $B^2=-1$ we get $\beta_D(B)\leq 3$, so $B\in \Upsilon_i$; a contradiction. Therefore $B\cdot \Delta_i^-=0$ and $\beta_D(B)\leq 3+B^2$. If $B^2=-1$ then $B$ is a superfluous $(-1)$-curve and if $B^2=-2$ then it is a $(-2)$-tip of $D$. In both cases we arrive at a contradiction.
\end{proof}

\bcor\label{cor:A_i_exists} If $K_i+\frac{1}{2}D_i^\flat$ is not nef and $\alpha_i(X_i,\frac{1}{2}D_i)$ is not a log Mori fiber space then there exists a $(-1)$-curve $A_i$ on $X_i$, not a subdivisor of $D_i$, which is $(K_i+\frac{1}{2}D_i^\flat)$-negative. Moreover, $$A_i\cdot(\Upsilon_i+\Delta_i^+)=0 \text{\ \ and\ \ } A_i\cdot (D_i-\Delta^-_i)=A_i\cdot\Delta^-_i=1$$ and the component of $\Delta^-_i$ meeting $A_i$ is a tip of $\Delta^-_i$.  \ecor

\begin{proof} By the contraction theorem of log MMP there exists an extremal birational contraction $Y_i\to Z$ of a $(K_{Y_i}+\frac{1}{2}D_{Y_i})$-negative curve. Let $A_i$ be the proper transform of the curve on $X_i$. By \ref{lem:producing_A} $A_i$ is not a subdivisor of $D_i$ and $(2K_i+D_i^\flat)\cdot A_i<0$. It follows that $A_i\cdot K_i<0$, so $A_i$ is a $(-1)$-curve and $A_i\cdot (D_i-\Upsilon_i-\Delta^+_i)<2+A_i\cdot \Bk'\Delta^-_i$. As we already noticed in the proof above, the negative definiteness of the intersection matrix of $B+\Upsilon_i+\Delta_i$ implies $A_i\cdot(\Upsilon_i+\Delta^+_i)=0$ and $A_i\cdot \Delta^-_i\leq 1$. It also implies that the component of $\Delta^-_i$ meeting $A_i$ is a tip of $\Delta_i$ ($\Delta_i$ may have two tips, and it does not have to be the one which is a tip of $D_i$). By $(P_8)$ we have $A_i\cdot D_i\geq 2$, so $2\leq A_i\cdot D_i<2+A_i\cdot \Bk'\Delta^-_i\leq 3$. Then $A_i\cdot \Delta^-_i=1$ and $A_i\cdot D_i=2$.
\end{proof}

We now define $(X_i,D_i)$ by induction. The definition will be completed once we check that properties  $(P_1)$-$(P_8)$ indeed hold for $(X_{i_0+1},D_{i_0+1})$. For this see \ref{prop:(Xi,Di)_properties}.

\bdfn\label{def:psi_and_models} Assume $(X_i,D_i)$ and $\psi_i$ for $0\leq i\leq i_0$ are defined and that the pair $(X_i,D_i)$ satisfies properties $(P_1)$-$(P_8)$. Let $\alpha_i\:(X_i,D_i)\to (Y_i,D_{Y_i})$ be as in \ref{lem:producing_A}. \benum[(i)]

\item If $K_{i_0}+\frac{1}{2}D_{i_0}^\flat$ is nef or if $\alpha_{i_0}(X_{i_0},\frac{1}{2}D_{i_0})$ is a log Mori fiber space then we put $n=i_0$ and so $\psi_{i_0}$ is the last morphism in the sequence. Otherwise we define $$\psi_{i_0+1}\:(X_{i_0},D_{i_0})\to (X_{i_0+1},D_{i_0+1}),$$ where $D_{i_0+1}=\psi_{(i_0+1)*}D_i$, as the composition of the contraction of some $A_{i_0}$ given by \ref{cor:A_i_exists} and successive contractions of superfluous $(-1)$-curves in the image of $D_{i_0}$.

\item We say that $\psi_{i+1}$ is a \emph{contraction of type $II$} if it contracts both components of $D_i$ meeting $A_i$. Otherwise it is of \emph{type $I$}.

\item We call $(Y_n,\frac{1}{2}D_{Y_n})$ a \emph{minimal model} of $(X_0,\frac{1}{2}D_0)$.

\item Let $\varphi_n\:(X_n',D_n')\to (Y_n,D_n)$ be the minimal log resolution. We call $(X_n',\frac{1}{2}D_n')$ an \emph{almost minimal model} of $(X_0,\frac{1}{2}D_0)$.
\eenum\edfn

\bsrem The definition of an almost minimal model is analogous to the definition in the theory of log surfaces in case of a simple normal crossing boundary with integral coefficients (see \cite[2.3.11, 2.4.3]{Miyan-OpenSurf}\footnote{In loc.\ cit.\ the minimal model is called 'relatively minimal'. This however is in conflict with modern terminology.}). We want to emphasize that at each step of the construction there may be more than one choice of $A_i$ (equivalently of $\psi_{i+1}$) and a priori the final model $(X_n,D_n)$ and in fact also the number '$n$' of the steps taken, may depend on these choices. This will not cause any problems. We simply work with a (any) fixed choice of a sequence of $A_i$'s.

Not also that in our approach we study a minimal model of $(X_0,D_0)$ and because of this $E_0=\psi_0(E)$ is tangent to $D_0-E_0$ and is never contracted by $\psi_n\circ\ldots\circ\psi_1$. It is essentially a matter of choice, but in a general situation, when the analysis is repeated for a general pair $(X,D)$ it is more natural to define $\psi_0\:(X,D)\to (X_0,D_0)$ as a birational morphism with smallest possible $\rho(\psi_0)$ which contracts (some) components of $D$, and such that $D_0$ contains no $(K_0+\frac{1}{2}D_0)$-negative components of negative self-intersection. However, doing so in our situation we see that this new version of $\psi_0$ contracts fewer curves and it may happen, for example for $c\leq 2$, $E_n^2=-1$, that the curve $E_n$ itself is $(K_n+\frac{1}{2}D_n)$-negative (and $D_n$ contains superfluous $(-1)$-curves), which would cause some additional problems.
\esrem

\blem\label{lem:amm_is_snc_min} Let $(X_n',\frac{1}{2}D_n')$ be an almost minimal model of the minimal weak resolution of singularities of $(\PP^2,\bar E)$ as above. If $D_n'$ is not snc-minimal then there exists a $\PP^1$-fibration of $X_n'$ such the intersection index of $D_n'$ with a general fiber is at most three. In particular, $\kappa_{1/2}(\PP^2,\bar E)=-\8$.  \elem

\begin{proof} Suppose $V$ is a superfluous $(-1)$-curve in $D_n'$. Assume first that $V$ is a proper transform of $E_n$. By the definition of $X_n'$ the component $W$ of $D_n'-V$ over a given cusp of $\bar E$ which meets $V$ is a $(-1)$-curve, non-branching in $D_n'-V$. Because $V$ is superfluous, for $f=V+W$ we have $f\cdot D_n\leq\beta_{D_n'}(V)+1\leq 3$, so we are done. Assume $V$ is not a proper transform of $E_n$. Since there are no superfluous $(-1)$-curves in $D_n$, some component $L$ of $\Exc \varphi_n$ meets $V$. The morphism $\varphi_n\:X_n'\to X_n$ is a log resolution of non-snc points of $D_n$ and all these points belong to $E_n$. It follows from the definition of $\psi_i$, $i\geq 0$ that for each such point at most two components of $D_n-E_n$ pass through it, exactly one of them is tangent to $E_n$, and each component of $D_n-E_n$ passes through at most one of such point. We may therefore assume that there is only one non-snc point in $D_n$. The curve $V'=\varphi_n(V)$ meets $E_n$, because $\varphi_n$ is minimal. If it is tangent to $E_0$ then $L^2=-1$ and $L$ meets at most two components of $D_n'$ other than $V$. Because $V$ is superfluous, for $f=L+V$ we have $f\cdot D\leq 3$ and $|L+V|$ gives the desired fibration. Assume that $V'$ meets $E_n$ transversally. Then $\varphi_n$ touches $V$ once, so $V'$ is a $0$-curve for which $$V'\cdot D_n=\beta_{D_n}(V')=\beta_{D_n'}(V)+1\leq 3$$ and hence $|f+V|$ gives the fibration. By \ref{lem:cuspidal_of_gt_and_khalf}(iii) $\kappa_{1/2}(\PP^2,\bar E)=-\8$.
\end{proof}

\brem\label{rem:diagram}  It follows from the definition that $\psi_0\:(X,D)\to (X_0,D_0)$ is a minimal log resolution of non-snc points of $D_0$ (in fact the points of tangency, all belonging to $E_0$), because the components of $D_0$ are smooth. For $i=1,\ldots,n$ let $\varphi_i\:(X_i',D_i')\to (X_i,D_i)$ be the minimal log resolution (which again is a resolution of non-snc points). Put $(X_0',D_0')=(X,D)$. Because $\Exc \psi_i$ and $\Exc \varphi_i$ are disjoint we have a lifting $\psi'_i\:(X_{i-1}',D_{i-1}')$ of $\psi_i$. Because $\alpha_i\circ\psi_i$ contracts $\Exc \alpha_{i-1}$ we have a morphism $\psi_i''\:(Y_{i-1},D_{Y_{i-1}})\to (Y_i,D_i)$ which makes the following diagram commute:

$$ \xymatrix{
{} & {(X,D)}\ar@{>}[r]^-{\psi_1'}\ar@{>}[d]^{\psi_0}\ar@{>}[ld]_-{\ \pi} &{(X_1',D_1')}\ar@{>}[r]^-{\psi_2'\ }\ar@{>}[d]^{\varphi_1} &{\ldots}\ar@{>}[r]^-{\psi_n'\ } &{(X_n',D_n')}\ar@{>}[d]^{\varphi_n}\\
{(\PP^2,\bar E)} & {(X_0,D_0)}\ar@{>}[r]^-{\psi_1}\ar@{>}[d]^{\alpha_0}\ar@{>}[l]^-{\ \pi_0} &{(X_1,D_1)}\ar@{>}[r]^-{\psi_2\ }\ar@{>}[d]^{\alpha_1} &{\ldots}\ar@{>}[r]^-{\psi_n\ } &{(X_n,D_n)}\ar@{>}[d]^{\alpha_n}
\\ {} & {(Y_0,D_{Y_0})}\ar@{>}[r]^-{\psi_1''} & {(Y_1,D_{Y_1})}\ar@{>}[r]^-{\psi_2''\ } & {\ldots}\ar@{>}[r]^-{\psi_n''\ } & {(Y_n,D_{Y_n})}}$$

 \erem

\section{Minimal models: properties}

We now analyze the process of minimalization in detail. We keep the notation from the previous section. Recall that $A_i\subset X_i$ meets a tip of $\Delta_i$, but this tip does not have to be a tip of $D_i$. Put $\psi=\psi_n\circ\ldots\circ\psi_1$ and $\psi'=\psi_n'\circ\ldots\circ\psi_1'$.

\bprop\label{prop:(Xi,Di)_properties} Let $(X_{i+1},D_{i+1})$ and $\psi_{i+1}$, $i=0,\ldots,n-1$ be as above. Then: \benum[(i)]

\item $X_{i+1}$, $D_{i+1}$ and $E_{i+1}$ have the properties $(P_1)$--$(P_8)$ listed after \ref{def:Delta,Upsilon,Dflat}.

\item $\psi_{i+1}^*(K_{i+1}+D_{i+1})=K_i+D_i+A_i$.

\item $\psi_{(i+1)*}(\Upsilon_i)\leq\Upsilon_{i+1}$.

\item If $A_i$ meets $\Delta_i^-$ not in a tip of $D_i$ then $\psi_{i+1}$ contracts only $A_i$ (hence is of type $I$) and we have $b_0(\Delta_{i+1})=b_0(\Delta_i)$ and  $b_0(\Delta^+_{i+1})-b_0(\Delta^+_i)=\#\Upsilon_{i+1}-\#\Upsilon_i=1$.

\item If $A_i$ meets $\Delta_i^-$ in a tip of $D_i$ then $\psi_{i+1}$ contracts at least $A_i$ and the connected component of $\Delta_i^-$ meeting $A_i$ (with a minor exception when this connected component and $A_i$ meet the same irreducible component of $D_i$). We have $b_0(\Delta_{i+1})=b_0(\Delta_i)-1$ and $b_0(\Delta^+_{i+1})-b_0(\Delta^+_i)\leq \#\Upsilon_{i+1}-\#\Upsilon_i\in \{0,1\}$.

\item For every component $U$ of $D_i-\Delta_i$ we have $U\cdot \Delta_i\leq 1$.

\item For each component $U$ of $D_0-E_0$ we have $\psi(U)\cdot E_n\leq U\cdot E_0+1$. If the equality holds then the unique $\psi_i$ increasing the intersection of the images of $U$ and $E_0$ is of type $I$. Moreover, it touches $U$ exactly once and either $U$ is the component of $\Delta_0^-$ met by $A_{i-1}$ or there is a unique connected component of $\Delta_0^-$ meeting $U$ and this component is contracted by $\psi_i$.

\item For every $m\geq 1$ we have $h^0(m(2K_i+D_i))=h^0(m(2K+D)).$
\eenum\eprop

\begin{proof} We proceed by induction on $i$.

(i) We only need to check $(P_2)$, $(P_6)$ and $(P_7)$, because the remaining properties follow from the definition of $\psi_{i+1}$. First of all, $D_0$ does not contain semi-ordinary endings by the definition of $\psi_0$. Suppose $T$ is a semi-ordinary ending of $D_{i+1}$. We may assume that the point $\psi_{i+1}(A_i)$ belongs to $T$, otherwise the proper transform of $T$ is a semi-ordinary ending of $D_i$. But $T\cdot (D_{i+1}-T)=1$, so since $A_i$ intersects $D_i$ in exactly two points, it follows that $D_i-A_i$ is not connected; a contradiction. Properties $(P_6)$ and $(P_7)$, which clearly hold for $(X_0,D_0)$, will follow by induction from the proof of the remaining parts of the proposition. $(P_8)$ holds for $X_0\setminus D_0$ by \ref{lem:cuspidal_of_gt_and_khalf}(iv) and holds in general because the complement of $X_{i+1}\setminus D_{i+1}$ in $X_i\setminus D_i$ is a closed subset isomorphic to $\C^*$.

(ii) is a consequence of the fact that $\psi_{i+1}$ is inner for $D_i+A_i$.

(iii) Suppose $L$ is a component of $\Upsilon_i$, such that $\psi_{i+1}(L)$ is not a component of $\Upsilon_{i+1}$. Since $A_i\cdot (\Upsilon_i+\Delta_i^+)=0$, there is a component $B$ in $D_i-\Upsilon_i-\Delta_i^+$ meeting $L$ for which $(\psi_{i+1})_*(B)=0$. It follows that $B\cdot L=1$. Since $\psi_{i+1}$ is inner for $D_i+A_i$, $B$ is contained in a maximal twig of $D_i$. But then (P6) for $(X_i,D_i)$ fails; a contradiction.

(vi) Suppose $U\cdot \Delta_i\geq 2$ and let $U_0$ be the proper transform of $U$ on $X_0$. Then $U_0$ meets two $(-2)$-twigs of $D_0$. Clearly, $U_0$ and these twigs are contained in $D_0-E_0$. Think of the morphism $\pi_0\:X_0\to \PP^2$ as a composition of blowdowns inside $D_0$. At some point $U_0$ becomes a $(-1)$-curve and the $(-2)$-twigs are not yet touched. Thus some image of $D_0-E_0$ contains the chain $[2,1,2]$, so the intersection matrix of $D_0-E_0$ is not negative definite. This is a contradiction.

\ssk

Let $V$ and $W$ be the components of $D_i$ meeting $A_i$ which are subdivisors of $\Delta^-_i$ and $D_i-\Delta_i-\Upsilon_i$ respectively. Let $\Delta_V$ be the connected component of $\Delta^-_i$ containing $V$.

(iv) Assume $V$ is not a tip of $D_i$. The contraction of $A_i$ does not touch components other than $V$ and $W$. It follows from the assumption that the image of $V$ after this contraction is a branching $(-1)$-curve, a component of $\Upsilon_{i+1}$. Even if it happens that the image of $W$ is a $(-1)$-curve, then it is also branching, because $W$ is not a $(-2)$-tip of $D_i$. Moreover $W\nleq\Delta_i$, so $\psi_{i+1}(W)\nleq\Upsilon_{i+1}$. Therefore, $\psi_{i+1}$ contracts only $A_i$, $\psi_{(i+1)*}(\Delta_V-V)\leq \Delta^+_{i+1}$ and $\psi_{i+1}(V)$ is the only component of $\Upsilon_{i+1}$ which is not an image of a component of $\Upsilon_i$. Thus $b_0(\Delta^+)$ increases by $1$ under $\psi_{i+1}$.  By (iii) images of components of $\Upsilon_i$ are components of $\Upsilon_{i+1}$, so we are done. Clearly, $(P_6)$ and $(P_7)$ for $(X_{i+1},D_{i+1})$ hold.

(v) Assume $V$ is a tip of $D_i$. Then $b_0(\Delta_{i+1})=b_0(\Delta_i)-1$. If $\Delta_V$ and $A_i$ meet a common component of $D_i$ then $\psi_{i+1}$ contracts $A_i$ and all but one irreducible components of $\Delta_V$. In this case $b_0(\Delta_{i+1})-b_0(\Delta_i)=-1$, $b_0(\Delta^+_{i+1})-b_0(\Delta^+_i)=0$ and $\#\Upsilon_{i+1}-\#\Upsilon_i=1$. We may therefore assume $\Delta_V$ and $A_i$ do not meet a common component of $D_i$. Then $\psi_{i+1}$ contracts at least $A_i+\Delta_V$.

Assume $W$ is not a tip of $D_i$. Then $\psi_{i+1}$ contracts only $A_i+\Delta_V$,  hence is of type $I$. If $T$ is the component of $D_i$ meeting $\Delta_V$ then $\psi_{i+1}(T)^2=T^2+1$, so $\psi_{i+1}(T)$ is not a component of $\Upsilon_{i+1}$, because either it is not a $(-1)$-curve or $\beta_{D_i}(T)\geq 3$ and by (vi) $T$ does not meet $\Delta_i-\Delta_V$. If it happens that $W$ meets $\Delta_i^-$, has $\beta_{D_i}(W)=2$ and $\psi_{i+1}(W)^2=-1$ then $\psi_{i+1}(W)$ is a component of $\Upsilon_{i+1}$ and the connected component of $\Delta_i^-$ meeting $W$, which is unique by (vi), becomes a connected component of $\Delta_{i+1}^+$. In other cases $\psi_{i+1}(W)$ is not a component of $\Upsilon_{i+1}$ and $b_0(\Delta^+)$ does not change. We obtain $b_0(\Delta^+_{i+1})-b_0(\Delta^+_i)= \#\Upsilon_{i+1}-\#\Upsilon_i\leq 1$. Again, we see that $(P_6)$ and $(P_7)$ hold.

Finally, assume $W$ is a tip of $D_i$. The contractions are inner for $D_i+A_i$ (in particular $E_i$ is not contracted) and they take place inside the sum of $A_i$ and the maximal twigs $T_V$ and $T_W$ of $D_i$ containing $V$ and $W$ respectively. Since $T_V$, $T_W$ are not contained in $\Delta_i^+$, by property $(P_6)$ for $X_i$ the components $V'$ and $W'$ are not components of $\Upsilon_i$. Let $V'$ and $W'$ be the branching components of $D_i$ meeting $T_V$ and $T_W$ respectively. Since each of $V', W'$ can meet $\Delta_i^-$ at most once, we have $b_0(\Delta_{i+1}^+)-b_0(\Delta_i^+)\in \{0,1,2\}$.

If $V'\neq W'$ then only $V'$ and $W'$ may become new components of $\Upsilon_{i+1}$ and we have $b_0(\Delta_{i+1}^+)-b_0(\Delta_i^+)= \#\Upsilon_{i+1}-\#\Upsilon_i$. On the other hand, if $V'=W'$ then $\beta_{D_i}(V')\geq 3$ and we see easily that $\psi_{i+1}(V')$ is not a component of $\Upsilon_{i+1}$, so $b_0(\Delta_{i+1}^+)-b_0(\Delta_i^+)=0$. However, in the latter case it may happen that $T_V+A_i+T_W$ contracts to a $(-1)$-curve, which then becomes a component of $\Upsilon_{i+1}$, hence $\#\Upsilon_{i+1}-\#\Upsilon_i\leq 1$. We see also that in any case, if $b_0(\Delta_{i+1}^+)-b_0(\Delta_i^+)\leq 1$ then $(P_6)$ and $(P_7)$ hold.

Suppose $b_0(\Delta_{i+1}^+)-b_0(\Delta_i^+)=2$. Then $W'\neq V'$. From the discussion above it follows also that $\psi_{i+1}$ contracts $T_V+A_i+T_W$ and makes $V'$ and $W'$ into $(-1)$-curves with $\beta_{D_{i+1}}=3$. In particular, at least one of $V'$, $W'$ is a $(-2)$-curve. Furthermore, there are $(-2)$-twigs $\Delta'_V$ and $\Delta'_W$ meeting $V'$ and $W'$ respectively (which become part of $\Delta^+$ after taking $\psi_{i+1}$). Since $\Delta'_V+V'+T_V$ and $T_W+W'+\Delta'_W$ are chains meeting the remaining part of $D_i$ once, they do not contain images of $A_j$'s (which are points) for $j<i$. Indeed, otherwise, because $A_j\cdot D_j=2$, reverting the contractions we would get that $D_0$ is disconnected. We therefore use the same letters for $\Delta'_V,V',T_V,T_W,W',\Delta'_W, A_i$ and their proper transforms on $X$. Since $F_\8=V'+T_V+A_i+T_W+W'$ contracts to $\psi_{i+1}(V'+W')$, and hence to a $0$-curve, there is a $\PP^1$-fibration $\lambda\:X\to \PP^1$ with $F_\8$ as one of the fibers. This fibration has exactly four sections contained in $D$, each intersecting a general fiber once. Since $E$ meets only $(-1)$-curves in $D$, it does not intersect $F_\8$, so it is vertical for $\lambda$. Let $C$ be a $(-1)$-curve in $D$ meeting $E$. By the definition of $D$ it is contained in some maximal twig of $D$. Consequently, $C\cdot V'=C\cdot W'=0$, hence $C$ is also vertical for $\lambda$. Denote the fiber of $\lambda$ containing it by $F_C$ and let $S_1$ and $S_2$ be the components of $D-E$ meeting $C$. The divisor $C+E+S_1+S_2$ cannot be vertical, because fibers of $\PP^1$-fibrations do not contain branching $(-1)$-curves. Thus, say, $S_1$ is horizontal for $\lambda$, which implies that $C$ has multiplicity $1$ in $F_C$. If so, then it is necessarily a tip of $F_C$, so $S_2$ is horizontal for $\lambda$. But both $S_i$ intersect $V'+W'$, which contradicts the fact that $C$ is a part of some maximal twig of $D$.

(vii) First of all we note that $\psi$ does not contract any component of $D_0$ meeting $E_0$, because the points of intersection with $E_0$ are not snc. It follows that if $A_i$ does not meet $E_i$ then $\psi_{i+1}$ does not change intersections of components of $D_i-E_i$ with $E_i$. Suppose $A_i\cdot E_i=1$ and let $\Delta_{A_i}$ be the connected component of $\Delta_i^-$ intersected by $A_i$. We may assume that $A_i$ meets $\Delta_{A_i}$ in a tip of $D_i$, otherwise the statement is clear. Then $\psi_{i+1}$ contracts exactly $A_i+\Delta_{A_i}$, so it increases (by $1$) the intersection with $E_i$ for only one component of $D_i-E_i$, say $U$, the one which meets $\Delta_{A_i}$. By (vi) $U$ meets at most one $(-2)$-twig of $D_0$, so we are done.

(viii) By \ref{lem:cuspidal_of_gt_and_khalf}(ii) we only need to show that $h^0(m(2K_i+D_i))$ is not affected $\psi_{i+1}$. If $Y$ is smooth and $\sigma:(Y',B')\to (Y,B)$, where $B'=\sigma_*^{-1}B+\Exc \sigma$, is an inner blowup for $B$ then $$2K_{Y'}+B'= \sigma^*(2K_Y+B)+\Exc \sigma,$$ so $h^0(m(2K_Y+B))= h^0(m(2K_{Y'}+B'))$ for every $m\geq 0$. By the definition of $\psi_{i+1}$ it is therefore enough to show that $$h^0(m(2K_i+D_i+A_i))= h^0(m(2K_i+D_i)).$$ But $A_i\cdot(2K_i+D_i)=0$, so $A_i$ is in the fixed part of any linear system $|m(2K_i+D_i)+aA_i|$ with $a>0$.
\end{proof}

\bnot\label{not:tau_eta_etc} Let $q_1,\ldots, q_c$ be the cusps of $\bar E$ and let $j\in \{1,\ldots,c\}$. We define the following numbers:\benum[(i)]

\item $\tau_j\geq 2$ is the number of times $\psi_0$ touches $E$ (equivalently, the number of curves over the cusp $q_j$ contracted by $\psi_0$),

\item $s_j$ is equal to $1$ if $\psi_0$ contains (in a decomposition into blowdowns) a contraction over $q_j$ which is outer for $D-E$ and $0$ otherwise,

\item $\tau_j^*=\tau_j-s_j-1\geq 0$,

\item $\tau^*=\sum_{j=1}^c \tau_j^*$, $s=\sum_{j=1}^c s_j$,  $\tau=\sum_{j=1}^c \tau_j$,

\item $n_k$ for $k=0,1$ is the number of contracted $A_i$'s, i.e.\ the $(-1)$-curves defined in \ref{cor:A_i_exists}, for which $A_i\cdot E_i=k$.

\item $c_0$ and $c_1$ are the numbers of semi-ordinary and non-semi ordinary cusps of $\bar E$ respectively.

\item $\eta=\#\Upsilon_n-\#\Upsilon_0$.\eenum

Clearly, $n=n_0+n_1=n_T+n_N$ and $\#\Upsilon_0=c_0$. By \ref{prop:(Xi,Di)_properties} $n_N\leq \eta\leq n$. \enot

\begin{lemma}\label{lem:properties_of_(Xi,Di)} With the above notation we have: \benum[(i)]

\item $K\cdot (K+D)=p_2(\PP^2,\bar E)$,

\item $\#D_i=\rho(X_i)+i$ for $i\geq 0$,

\item $n+p_2(\PP^2,\bar E)\leq 5$, and if the equality holds then $n\neq 0$, $s=0$ and $D_n$ has no tips.

\item $K_n\cdot (K_n+D_n)=p_2(\PP^2,\bar E)-c-\tau^*-n$,

\item $E_n\cdot (K_n+D_n)=2c-2+\tau^*+n_1$,

\item $(D_n-E_n)\cdot E_n=2c+n_1+\tau^*$,

\item $p_a(D_n)=n+\tau^*+c$.

\eenum\end{lemma}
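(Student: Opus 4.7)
My plan is to prove these identities by induction on $i = 0, \ldots, n$, using the relation $\psi_{i+1}^*(K_{i+1}+D_{i+1}) = K_i+D_i+A_i$ from Proposition~\ref{prop:(Xi,Di)_properties}(ii), together with $K_i\cdot A_i = -1$ (since $A_i$ is a $(-1)$-curve) and $A_i\cdot D_i = 2$ (Corollary~\ref{cor:A_i_exists}). I would first handle (i) and (ii), then (iv)--(vii) by a common mechanism, and finally (iii), which is the main obstacle.

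For \textbf{(i)}, Riemann--Roch applied to $2K+D$ gives $\chi(\mathcal{O}_X(2K+D)) = 1 + \tfrac{1}{2}(2K+D)(K+D)$; since $D$ is a rational tree one has $p_a(D)=0$, hence $D(K+D) = -2$ and $\chi(2K+D) = K(K+D)$. Serre duality and bigness of $K+D$ (log general type) yield $h^2(2K+D) = h^0(-K-D) = 0$, and $h^1(2K+D) = 0$ follows from Kawamata--Viehweg applied to $2K+D = K + (K+D)$ via the Fujita--Zariski decomposition of $K+D$. For \textbf{(ii)} the base case $\#D_0 = \rho(X_0)$ is equivalent to the $\Q$-acyclicity of $\PP^2\setminus\bar E$ from Lemma~\ref{lem:cuspidal_of_gt_and_khalf}(i); the inductive step is immediate since $\psi_{i+1}$ contracts $A_i\notin D_i$ together with some $k_i\ge 0$ components of $D_i$, dropping $\rho$ by $1+k_i$ and $\#D$ by $k_i$.

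For \textbf{(iv)}--\textbf{(vii)}, intersecting $\psi_{i+1}^*(K_{i+1}+D_{i+1}) = K_i+D_i+A_i$ with $K_i$, $D_i$, $E_i$ respectively, and using the projection formula together with $\psi_{i+1,*}D_i = D_{i+1}$, $\psi_{i+1,*}E_i = E_{i+1}$, yields
\begin{align*}
K_{i+1}(K_{i+1}+D_{i+1}) &= K_i(K_i+D_i) - 1,\\
D_{i+1}(K_{i+1}+D_{i+1}) &= D_i(K_i+D_i) + 2,\\
E_{i+1}(K_{i+1}+D_{i+1}) &= E_i(K_i+D_i) + A_i\cdot E_i.
\end{align*}
The second gives $p_a(D_{i+1}) = p_a(D_i)+1$, and (vi) reduces to (v) via $K_n\cdot E_n = -2-E_n^2$. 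Summing from $i=0$ to $n-1$ and using $\sum_i A_i\cdot E_i = n_1$ reduces each of (iv), (v), (vii) to its base value at $(X_0, D_0)$. For those I would regard $\psi_0^{-1}\colon X_0\to X$ as a sequence of blow-ups at non-snc points of $D$; each has $k\in\{2,3\}$ components of $D$ through its centre, and the standard blow-up formulas $K(K+D)\mapsto K(K+D) + (k-2)$, $(K+D)^2\mapsto (K+D)^2 - (k-2)^2$ accumulate per cusp to a total contribution $\tau_j^* + 1 = t_j - 1$, where $t_j$ is the tangency order of $E_0$ with the last exceptional $L_0^{(j)}$. Combined with $K(K+D) = p_2$ from (i), this produces $K_0(K_0+D_0) = p_2 - c - \tau^*$ and $p_a(D_0) = c + \tau^*$; the base case for (v) is computed directly on $X_0$ as $E_0\cdot(K_0+D_0) = -2 + \sum_j t_j = 2c - 2 + \tau^*$, using $E_0\cdot L_0^{(j)} = t_j$ and $E_0\cong\PP^1$.

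The hard part will be \textbf{(iii)}. By (iv), the inequality $n+p_2 \le 5$ is equivalent to $K_n(K_n+D_n) + 2n + c + \tau^* \le 5$, a non-trivial Chern-class bound on $X_n$. I expect to derive it by exploiting the log MMP structure: per Remark~\ref{rem:strategy}, if $\PP^2\setminus\bar E$ admits no structural $\C^{**}$-fibration then $K_{Y_n}+\tfrac{1}{2}D_{Y_n}$ is nef or anti-ample, so its self-intersection is non-negative; pulling back by $\alpha_n$ and combining with (iv)--(vii), together with a BMY-type inequality on the almost minimal model, should yield the desired bound. The equality case, which forces $n\ne 0$, $s=0$, and $D_n$ having no tips, would require tracing precisely where the MMP/BMY inequalities become sharp and analysing the fine combinatorics of $\Upsilon_n$ and $\Delta_n^\pm$ that control $D_n^\flat$.
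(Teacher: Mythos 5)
Parts (i), (ii) and (iv)--(vii) of your proposal are correct and essentially reproduce the paper's argument: Riemann--Roch plus vanishing for (i), $\Q$-acyclicity and the bookkeeping $\#D_{i+1}-\rho(X_{i+1})=\#D_i-\rho(X_i)+1$ for (ii), and for the rest the recursion coming from $\psi_{i+1}^*(K_{i+1}+D_{i+1})=K_i+D_i+A_i$ together with $K_i\cdot A_i=-1$, $D_i\cdot A_i=2$, $E_i\cdot A_i\in\{0,1\}$, anchored by a base-case computation on $(X_0,D_0)$. Your per-cusp bookkeeping via tangency orders is equivalent to the paper's computation of $K_0\cdot(K_0+D_0-E_0)$, $K_0\cdot E_0$, $E_0^2$ in terms of $\tau_j$ and $s_j$, and the final formulas agree.

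The gap is in (iii), and it is genuine. Your proposed route --- non-negativity of $(K_{Y_n}+\frac{1}{2}D_{Y_n})^2$ --- fails in two ways. First, it only applies when $\PP^2\setminus\bar E$ admits no structural $\C^{**}$-fibration, whereas (iii) is needed, and true, unconditionally; in the fibered case $K_{Y_n}+\frac{1}{2}D_{Y_n}$ is negative on fibers and its square need not be non-negative. Second, even in the non-fibered case, $(2K_n+D_n^\flat)^2\geq 0$ yields (via Lemma \ref{lem:square_of_2K+Dflat}) an upper bound on $\rho(X_n')$ --- this is exactly Theorem \ref{thm:MAIN1detailed}(6) --- and there is no evident way to convert a bound on the Picard rank into $n+p_2(\PP^2,\bar E)\leq 5$, let alone to extract the equality statement. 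The paper's proof of (iii) is independent of the MMP dichotomy: one takes the proper transforms $A_i'$ of the contracted curves back on the original log resolution $X$, where they are $(-1)$-curves with $A_i'\cdot D=2$, so that $(K_X+D+\sum_{i=1}^n A_i')^2=(K_X+D)^2+n=p_2(\PP^2,\bar E)-2+n$; one snc-minimalizes $D+\sum A_i'$ (an inner operation, preserving this square), notes that the resulting open surface is almost minimal of log general type with Euler characteristic $1$ (removing the $\C^*$'s $A_i\cap(X_i\setminus D_i)$ does not change $\chi$), and applies the log BMY inequality $(K_{\wt X}+\wt D)^2+\ind(\wt D)\leq 3\chi(\wt X\setminus\wt D)=3$. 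This gives $p_2(\PP^2,\bar E)+n+\ind(\wt D)\leq 5$, and the equality analysis is precisely what produces the extra assertions: $\ind(\wt D)=0$ forces $(K_{\wt X}+\wt D)^-=0$, hence $\wt D$ has no tips, hence $s=0$ and $D_n$ is tipless; and $n=0$ is excluded because then $\wt D=D$ has at least three maximal twigs, so $\ind(\wt D)>0$. Without the idea of augmenting the boundary by the $A_i'$ before applying BMY, statement (iii) --- especially its equality part --- is out of reach.
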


\begin{proof}  (i) The Riemann-Roch theorem gives $$p_2(\PP^2,\bar E)-h^1(2K+D)+h^2(2K+D)=K\cdot (K+D).$$ By \ref{lem:cuspidal_of_gt_and_khalf}(iv) $(K+D)^-=\Bk D$, so by \ref{lem:Bk}(i) $(K+D)^-$ is an effective $\Q$-divisor with simple normal crossing support and proper fractional coefficients. Since $(K+D)^+$ is nef and big, the Kawamata-Viehweg vanishing theorem (see for example \cite[9.1.18]{Lazarsfeld_II}) says that $h^i(2K+D)=0$ for $i>0$.

(ii) Since $(X,D)$ is a $\Q$-homology plane, we have $\#D=\rho(X)$. By the definition of $X_i$ we have $\#D_i-\rho(X_i)=\#D_{i-1}-\rho(X_{i-1})+1$ for every $i=1,\ldots,n$.

(iii) Let $A_i'$, $i=1,\ldots,n$ denote the proper transform of $A_i$ on $X$ and let $(X,D+\sum_{i=1}^nA_i')\to (\wt X,\wt D)$ be the snc-minimalization of $D+\sum_{i=1}^nA_i'$. Note that if $\kappa_{1/2}(\PP^2,\bar E)\geq 0$ then by \ref{lem:amm_is_snc_min} the boundary $D_n'$ of an almost minimal model is already snc-minimal, so $(\wt X,\wt D)=(X_n',D_n')$. Both pairs are smooth completions of $X_n\setminus D_n$. Note also that it is not possible that $c=1$ and $E$ is a $(-1)$-tip of $D$, because otherwise $|E+C|$, where $C$ is the unique $(-1)$-curve in $D-E$, induces a $\C^*$-fibration of $\PP^2\setminus \bar \E$, an hence $\kappa(\PP^2\setminus\bar E)<2$. Therefore, by the properties of $A_i$'s we see that $(A_i')^2=-1$ and that the minimalization morphism is inner for the boundary, hence it does not change the self-intersection of the log canonical divisor.  The latter is $$(K_X+D+\sum_{i=1}^nA_i')^2=(K_X+D)^2+n=p_2(\PP^2,\bar E)-2+n.$$ The surface $X\setminus D$ is a $\Q$-homology plane of log general type, so it contains no topologically contractible curves. It follows that $\wt X-\wt D$ is of log general type and contains no topologically contractible curves, hence is almost minimal in the sense of \cite{Miyan-OpenSurf}. We have also $\chi(\wt X\setminus\wt  D)=\chi(X\setminus D)$, because $A_i\cap (X_i\setminus D_i)$ is a $\C^*$. By the logarithmic Bogomolov-Miyaoka-Yau inequality, proved originally by Kobayashi-Nakamura-Sakai, (for a formulation convenient for our purposes, which follows from \cite{Langer}, see \cite[2.5]{Palka-exceptional}) we get $$(K_{\wt X}+\wt D)^2+\ind (\wt D)\leq ((K_{\wt X}+\wt D)^+)^2\leq 3\chi(\wt X\setminus \wt D)=3,$$ so \beq p_2(\PP^2,\bar E)+n+\ind(\tilde D)\leq 5.\label{eq:BMY_for_Dtilda}\eeq If $\ind(\tilde D)=0$ then necessarily $(K_{\wt X}+\wt D)^-=0$, so $\wt D$ (and hence $D_n'$) has no maximal twigs, hence no tips. Equivalently, $s_j=0$ for $j=1,\ldots,c$ (the twigs of $D$ contracted by $\psi_0$ are disjoint from all $A_i'$) and $D_n$ has no tips. Finally, $D_0'=D$ has at least three maximal twigs, so if $n=0$ then $p_2(\PP^2,\bar E)\neq 5$.

(iv) Since $\psi_i$ is a composition of blowdowns which are inner with respect to $D_{i-1}+A_{i-1}$, we have $$\psi_i^*(K_i+D_i)=K_{i-1}+D_{i-1}+A_{i-1},$$ so $K_i\cdot (K_i+D_i)=K_{i-1}\cdot (K_{i-1}+D_{i-1})-1$ and we get $K_n\cdot(K_n+D_n)=K_0\cdot(K_0+D_0)-n$. From the definition of $\psi_0$ we compute $K_0\cdot (K_0+D_0-E_0)=K\cdot (K+D-E)+s$ and $K_0\cdot E_0=K\cdot E-\tau$, so $$K_0\cdot (K_0+D_0)=K\cdot(K+D)-c-\tau^*=p_2(\PP^2,\bar E)-c-\tau^*.$$

(v) Using the remark on $\psi_i$ from (iv) we have $$E_i\cdot(K_i+D_i)=E_{i-1}\cdot (K_{i-1}+D_{i-1})+E_{i-1}\cdot A_{i-1},$$ so $E_n\cdot (K_n+D_n)=E_0\cdot (K_0+D_0)+n_1$. Now $E_0\cdot (K_0+D_0-E_0)=E\cdot (K+D-E)-s$ and $E_0^2=E^2+\tau$, so $$E_0\cdot(K_0+D_0)=E\cdot(K+D)+\tau^*+c.$$ Finally, $E\cdot (K+D)=\beta_D(E)-2=c-2$.

(vi) $(D_n-E_n)\cdot E_n=(D_0-E_0)\cdot E_0+n_1=\ds \sum_{j=1}^t(\tau_j+1-s_j)+n_1=\tau^*+2c+n_1$.

(vii) Elementary properties of the arithmetic genus give $p_a(D_n)=p_a(D_0)+n$ and $p_a(D_0)=p_a(D_0-E_0)+(D_0-E_0)\cdot E_0-1$, so $$p_a(D_0)=(1-c)+\sum_{j=1}^c(\tau_j+1-s_j)-1=\tau^*+c.$$
\end{proof}

We now prove one of the bounds in Theorem \ref{thm:MAIN4}.

\begin{proof}[Proof of Theorem \ref{thm:MAIN4}, part 1] By \eqref{eq:BMY_for_Dtilda} $\ind(\tilde D)\leq 5-p_2(\PP^2,\bar E)-n$. Note that every $\pi_0(A_i)\subseteq \PP^2$, $i>1$ meets $\pi_0(A_1)$, and by the definition of $A_i$'s the intersection point is necessarily one of the cusps of $\bar E$. Therefore, in the process of minimalization at least $c-n-1$ cusps are untouched. It is a straightforward computation (see \cite[4.2]{OrZa_cusp}) that the contribution to the inductance of the boundary from the twigs over any cusp is strictly bigger than $\frac{1}{2}$, hence $\frac{1}{2}(c-n-1)<\ind(\tilde D)$, which gives $c+n+2p_2(\PP^2,\bar E)\leq 10$. On the other hand, applying the BMY inequality to $(X,D)$ we have $\frac{1}{2}c<\ind(D)\leq 5-p_2(\PP^2,\bar E)$, so $c+2p_2(\PP^2,\bar E)\leq 9$. These inequalities may be written together as
\beq c+2p_2(\PP^2,\bar E)+\max(0,n-1)\leq 9\label{eq:bound_on_c+n}.\eeq \end{proof}

We now want to compute the square of $2K_n+D_n^\flat$. We define $\Upsilon_i^0$ to be the sum of these components of $\Upsilon_i$ which do not meet $\Delta_i^+$. By the definition of $\Upsilon_i$, every component $U$ of $\Upsilon_i^0$ if a $(-1)$-curve meeting exactly one component of $D_i$ and has $\beta_{D_i}(U)=2$ (so it is tangent to this component or meets it twice transversally). Clearly, $(\psi_{i+1})_*\Upsilon_i^0\leq \Upsilon_{i+1}^0$ and $\Upsilon_0^0$ consists of the $C_i$'s for which $q_i\in\bar E$ is an ordinary cusp.

\bsrem Note that $\Upsilon_{i+1}^0$ may contain more components than just $(\psi_{i+1})_*\Upsilon_i^0$. To see this let $U_1+\ldots+U_k$ be the $(-2)$-twig of $D_i$ met by $A_i$ and let $U_{k+1}$ be the component of $D_i-U_1-\ldots-U_k$ met by $A_i$. If, for instance, $A_i\cdot U_1=U_k\cdot U_{k+1}=1$ (so $A_i+U_1+\ldots+U_k+U_{k+1}$ is an snc cycle) then $\psi_{i+1}$ contracts exactly $A_i+U_1+\ldots+U_{k-1}$ and $(\psi_{i+1})_*U_k$ is a $(-1)$-curve meeting $U_{k+1}$ in two different points, hence it is a component of $\Upsilon_{i+1}^0$. Similarly, if $U_{k+1}$ and $U_1+\ldots+U_k$ are maximal twigs of $D_i$ meeting a common component $U_{k+2}$ then $(\psi_{i+1})_*U_{k+1}$ is a component of $\Upsilon_{i+1}^0-(\psi_{i+1})_*\Upsilon_i^0$. In both examples $\psi_{i+1}$ is of type $I$, but this is not true in general. \esrem

\blem\label{lem:square_of_2K+Dflat} Let $\bar E\subseteq \PP^2$, $(X_n,D_n)$ and $\Upsilon_n^0$ be as above. Let $(X_n',\frac{1}{2}D_n')$ be an almost minimal model as in \ref{def:psi_and_models} and $\Delta_n'$ the sum of maximal $(-2)$-twigs of $D_n'$. Then \beq\label{eq:square_of_2K+Dflat} (2K_n+D_n^\flat)^2+\delta(\Delta^-_n)=3p_2(\PP^2,\bar E)+8+b_0(\Delta_n')+\#\Upsilon_n^0-\rho(X_n')-n.\eeq
\elem

\begin{proof} We have $(\Upsilon_n+\Delta_n)\cdot (2K_n+D^\flat)=0$ and $([1,2,\ldots,2])^2=-1$, so by \ref{lem:Bk} and \ref{lem:properties_of_(Xi,Di)}(vi) $$(2K_n+D_n^\flat)^2=(2K_n+D_n)^2-(\Upsilon_n+\Delta_n^+)^2-(\Bk'\Delta_n^-)^2=(2K_n+D_n)^2+\#\Upsilon_n+\ind (\Delta_n^-).$$ We have $\#\Upsilon_n=c_0+\eta$ and we compute easily $\ind (\Delta_n^-)=b_0(\Delta^-_n)-\delta(\Delta^-_n)$. Now
\begin{align*}
(2K_n+D_n)^2 &=K_n^2+3K_n\cdot(K_n+D_n)+D_n\cdot(K_n+D_n)\\
&=K_n^2+3(p_2(\PP^2,\bar E)-c-\tau^*-n)+2(n+\tau^*+c-1) \\
&= K_n^2+3p_2(\PP^2,\bar E)-2-c-\tau^*-n .
\end{align*} The Noether formula gives $$(2K_n+D_n^\flat)^2=3p_2(\PP^2,\bar E)+8+b_0(\Delta^-_n)-\delta(\Delta^-_n)-\rho(X_n)-\tau^*-(n-\eta)-c_1.$$

Fix $j\in \{1,\ldots, c\}$. Let $C_j$ be the $(-1)$-curve in $\wt Q_j$ meeting $E_0$. If $s_j\neq 1$ then there is exactly one more component of $\wt Q$ meeting $E_0$, call it $B_0$. It meets $E_0$ transversally. Put $B_0=0$ if $s_1=1$. Let $A,B$ and $A', B'$ be the proper transforms of $A_0, B_0$ on $X_n$ and on $X_n'$ respectively. Clearly, $A$ and $B$ are not contained in $(-2)$-twigs of $D_0$. We have $(A')^2=A^2-\tau$ and $(B')^2=B^2-1$. If $A'$ is a component of $\Delta_n'$ then $A^2=\tau-2\geq 0$, so $\psi$ touches $A_0$. But then $A$ is not contained in a twig of $D_n$, which is impossible. If $B'$ is a component of $\Delta_n'$ then $B$ is a $(-1)$-curve non-branching in $D_0-E_0$, which is impossible by the definition of $\psi_0$. Thus $A', B'$ are not components of $\Delta_n'$. We get $b_0(\Delta_n')=b_0(\Delta_n)+s$. By the definition of $\Delta_i^+$ we have $\#\Upsilon_i-b_0(\Delta_i^+)=\#\Upsilon_i^0$. Then $$b_0(\Delta_n')-s=b_0(\Delta_n^-)+b_0(\Delta_n^+)=b_0(\Delta_n^-)+\#\Upsilon_n-\#\Upsilon_n^0.$$ We have $\#\Upsilon_n=c_0+\eta$ and $\rho(X_n')=\rho(X_n)+\tau$, hence $$(2K_n+D_n^\flat)^2=3p_2(\PP^2,\bar E)+8+b_0(\Delta_n')+\#\Upsilon_n^0-\delta(\Delta^-_n)-\rho(X_n')-n.$$
\end{proof}

By \ref{lem:properties_of_(Xi,Di)}(iii) $p_2(\PP^2,\bar E)\leq 4$. Recall that $\alpha_n$ is the morphism defined on $X_n$ which contracts $\Upsilon_n+\Delta_n$. An \emph{open log del Pezzo surface of rank $1$} is a log surface $(X,B)$, such that $-(K_X+B)$ is ample, $B\neq 0$ and $\rho(X)=1$. We summarize our analysis in the following theorem, which implies \ref{thm:MAIN1}. (To recover \ref{thm:MAIN1}(4) use \eqref{eq:b2_bound} and \ref{thm:MAIN4}).

\bthm\label{thm:MAIN1detailed} Let $\bar E\subseteq \PP^2$ be a rational cuspidal curve for which $\PP^2\setminus \bar E$ is of log general type and let $(X_0,D_0)\to (\PP^2,\bar E)$ be the minimal weak resolution of singularities. Let $(X',\frac{1}{2}D')$ and $(Y,\frac{1}{2}D_Y)$ denote an almost minimal and a minimal model of $(X_0,\frac{1}{2}D_0)$. Put $n=p_a(D')$ Then: \benum[(1)]

\item $X'\setminus D'\cong Y\setminus D_Y$ is isomorphic to an open subset of $\PP^2\setminus \bar E\cong X_0\setminus D_0$ with the complement being a disjoint sum of $n$ curves isomorphic to $\C^*$.

\item $\#D'=\rho(X')+n$ and $K'\cdot D'=p_2(\PP,\bar E)+\rho(X')-n-10$.

\item $n+p_2(\PP^2,\bar E)\leq 5$, and if the equality holds then $n\neq 0$ and $D'$ has no twigs.

\item If $\kappa_{1/2}(\PP^2,\bar E)=-\8$ then one of the following holds: \benum[(a)]
    \item $Y$ admits a $\PP^1$-fibration with irreducible fibers inducing a $\C^{**}$-fibration on $\PP^2\setminus\bar E$.
    \item $(Y,\frac{1}{2}D_Y)$ is an open log terminal log del Pezzo surface of rank $1$, whose boundary has $n+1\leq 6$ components.
     \eenum

\item If $\kappa_{1/2}(\PP^2,\bar E)\geq 0$ then $(X',D')$ is snc-minimal and $n\leq 4$.

\item If $\PP^2\setminus \bar E$ does not admit a structural $\C^{**}$-fibration then $$\rho(X')\leq 3p_2(\PP^2,\bar E)+8-n+b_0(\Delta_0')+c_0',$$ where $\Delta_0'$ is the sum of maximal $(-2)$-twigs of $D$ and $c_0'$ is the number of ordinary cusps of $\bar E$.
\eenum\ethm

\begin{proof} We have $(Y,D_Y)=(Y_n,D_{Y_n})$ and $(X',D')=(X_n',D_n')$. (1) Recall (\ref{rem:diagram}) that we have a commutative diagram $$ \xymatrix{
 {} &{(X,D)}\ar@{>}[r]^-{\psi'}\ar@{>}[d]^{\psi_0}\ar@{>}[ld]_-{\pi} &{(X_n',D_n')}\ar@{>}[d]^{\varphi_n}& {}\\
{(\PP^2,\bar E)} &{(X_0,D_0)}\ar@{>}[r]^-{\psi}\ar@{>}[l]^-{\pi_0} &{(X_n,D_n)}\ar@{>}[r]^{\alpha_n}& {(Y_n,D_{Y_n})}\ .}$$
The morphism $\alpha_n\circ\varphi_n\:(X_n',D_n')\to (X_n,D_n)\to (Y_n,D_{Y_n})$ is the minimal log resolution of singularities, so $X_n'\setminus D_n'\cong X_n\setminus D_n\cong Y_n\setminus D_{Y_n}$. By \ref{cor:A_i_exists} $A_i\cap (X_i\setminus D_i)\cong \C^*$. Also, $p_a(D')=p_a(D)+n=n$.

(2) As in the proof of \ref{lem:properties_of_(Xi,Di)}(iv) we have $$K_n'\cdot(K_n'+D_n')=K\cdot (K+D)=p_2(\PP^2,\bar E),$$ so by the Noether formula $$K_n'\cdot D_n'=p_2(\PP^2,\bar E)+\rho(X_n')-n-10.$$ By \ref{lem:properties_of_(Xi,Di)}(ii) $\#D_n'-\rho(X_n')=\#D_n-\rho(X_n)=n$.

(3) follows from  \ref{lem:properties_of_(Xi,Di)}(iii) and from the fact that $D_n'$ has no twigs if and only if $s=0$ and $D_n$ has no twigs.

(4) The pair $(Y_n,\frac{1}{2}D_{Y_n})=(\alpha_n(X_n),\frac{1}{2}\alpha_{n*}D_n)$ is a log Mori fiber space by \ref{cor:A_i_exists}. Because of the affiness of $\PP^2\setminus \bar E$ the divisor $D_{Y_n}$ is nonzero. If the base of the fiber space is a point then $Y_n$ is a log del Pezzo surface of rank $1$. Its singularities are log terminal by \ref{lem:producing_A} and $\#D_{Y_n}-\rho(Y_n)=\#D_n-\rho(X_n)=n$ by \ref{lem:properties_of_(Xi,Di)}(ii). Assume the base is a curve and let $f$ be a general fiber. Then we have $f\cdot (K_{Y_n}+\frac{1}{2}D_{Y_n})<0$, so $f\cdot D_{Y_n}\leq 3$. If $f\cdot D_{Y_n}\leq 2$ then $$\kappa(\PP^2\setminus\bar E)=\kappa(Y_n\setminus D_{Y_n})\leq 1+\kappa(\C^*)=1$$ by the 'easy addition theorem'.

(6) If $\PP^2\setminus \bar E$ is not $\C^{**}$-fibered then either $K_{Y_n}+\frac{1}{2}D_{Y_n}$ is nef or the minimal model $(Y_n,\frac{1}{2}D_{Y_n})$ is a log del Pezzo surface, and then $-(K_{Y_n}+\frac{1}{2}D_{Y_n})$ is ample. In both cases $(K_{Y_n}+\frac{1}{2}D_{Y_n})^2\geq 0$, which by \ref{lem:producing_A}(i) and \ref{lem:square_of_2K+Dflat} gives $$\rho(X_n')\leq 3p_2(\PP^2,\bar E)+8-n+b_0(\Delta'_n)+\#\Upsilon_n^0-\delta(\Delta^-_n),$$ and the inequality is strict in the del Pezzo case. Put $\upsilon_n=\#\Upsilon_n^0$.  Note that $\psi_*(\Upsilon_0^0)\leq \Upsilon_n^0$, so $c_0'=\upsilon_0\leq \upsilon_n$. It is easy to see that if a component of $\Upsilon_{i+1}^0-\psi_*(\Upsilon_i^0)$ is created then the tip of $\Delta_i^-$ met by $A_i$ is a tip of $D_i$. \beq b_0(\Delta_n')+(\upsilon_n-\upsilon_0)\leq b_0(\Delta_0').\label{eq:b0(Delta_n)_bound}\eeq Then $b_0(\Delta_n')+\upsilon_n\leq b_0(\Delta_0')+c_0'$, so $$\rho(X_n')\leq 3p_2(\PP^2,\bar E)+8-n+b_0(\Delta'_0)+c_0'-t_\Delta(\psi').$$

(5) Assume $\kappa_{1/2}(\PP^2,\bar E)\geq 0$. Then $(X_n',D_n')$ is snc-minimal by \ref{lem:amm_is_snc_min}. Suppose $n=5$. Then $p_2(\PP^2,\bar E)=b_0(\Delta_n')=c_0'=0$ by \eqref{eq:BMY_for_Dtilda}, so (6) gives $\rho(X_5)+\tau=\rho(X_5')\leq 3$. But $\tau\geq 2c\geq 2$, so $X_5\cong \PP^2$. By \ref{lem:properties_of_(Xi,Di)}(ii) $\#D_5=6$, so $\deg (2K_{\PP^2}+D_5)\geq 0$, hence $2K_5'+D_5'\geq 0$; a contradiction.
\end{proof}

\vskip 0.3cm
We now prove Theorems \ref{thm:MAIN2} and \ref{thm:MAIN4}

\begin{proof}[Proof of Theorem \ref{thm:MAIN2}(1)]  By \ref{thm:MAIN1detailed}(6) $$\rho(X')\leq 3p_2(\PP^2,\bar E)+8-n+b_0(\Delta_0')+c_0'$$ and the inequality is strict in the del Pezzo case.  Now $b_0(\Delta_0')+c_0'$ can be bounded from above by the numer of maximal twigs of $D$, which in not bigger than $17-p_2(\PP^2,\bar E)$ by a result of Tono \cite{Tono-on_the_number_of_cusps}. But it is better, and easier, to reuse the BMY inequality as follows (we owe this observation to Mariusz Koras). First, observe that if we have a twig $R=[a_1,\ldots,a_n]$ with $a_i\geq 2$ and its subtwig $R'=[a_1,\ldots,a_{n-1}]$ then $\ind(R)\geq \ind(R')$. Indeed, this follows by induction from the formula $\ind([a_1,\ldots,a_n])=1/(a_1-\ind([a_2,\ldots,a_n]))$. We get $$\frac{1}{2}b_0(\Delta_0')+\frac{1}{3}c_0'=b_0(\Delta_0')\ind([2])+c_0'\ind([3])\leq \ind(D)\leq 5-p_2(\PP^2,\bar E),$$ where the last inequality is just the BMY inequality for $(X,D)$ (cf. \eqref{eq:BMY_for_Dtilda}). Thus \beq b_0(\Delta_0')\leq 10-2p_2(\PP^2,\bar E)-\frac{2}{3}c_0'\label{eq:b0(Delta)_bound}.\eeq We obtain:

\beq\label{eq:b2_bound} \rho(X_n')\leq p_2(\PP^2,\bar E)+18-n+\frac{1}{3}c_0'. \eeq  Then \ref{thm:MAIN1detailed}(2) gives
\begin{eqnarray}
    \label{eq:D_n'_bound}\#D_n'&\leq& p_2(\PP^2,\bar E)+18+\frac{1}{3}c_0',\\
     K_n'\cdot D_n'&\leq& 2p_2(\PP^2,\bar E)+8-2n+\frac{1}{3}c_0',
\end{eqnarray}
and the inequalities are strict in the del Pezzo case. It remains to prove that \beq p_2(\PP^2,\bar E)+\frac{1}{3}c_0'\leq 4,\label{eq:p2+c0'}\eeq because then $\rho(X_n')\leq 22$, $\#D_n'\leq 22$ and $K_n'\cdot D_n'\leq 16$.

Suppose $p_2(\PP^2,\bar E)>4-\frac{1}{3}c_0'$. By \eqref{eq:bound_on_c+n} $p_2(\PP^2,\bar E)\leq \frac{9}{2}-\frac{1}{2}c<5$, so $c_0'>0$ and $(c-c_0')+\frac{1}{3}c_0'\leq 1$. Then $c=c_0'\leq 3$ and $p_2(\PP^2,\bar E)=4$. We get $\frac{5}{6}c_0'\leq \ind(D)\leq 5-p_2(\PP^2,\bar E)=1$, so $c_0'=1$. The genus formula gives $\binom{\deg \bar E-1}{2}=\frac{1}{2}\mu(\mu-1)$, where $\mu=2$, so $\deg \bar E=3$. However, $2K_{\PP^2}+\bar E=\pi_*(2K_X+D)\geq 0$; a contradiction.
\end{proof}

\begin{proof}[Proof of Theorem \ref{thm:MAIN4} (part 2)] By \eqref{eq:bound_on_c+n} we only need to show that $c\leq 6$. The case when $\PP^2\setminus \bar E$ has a structural $\C^{**}$-fibration will be treated in \ref{lem:Cstst_fibrations}(iv), so we assume the opposite. Over each cusp $\psi_0$ contracts at least two curves and makes one component tangent to $E_0$, hence this component is not contracted by $\psi\: X_0\to X_n$. It follows that $D_n'-E_n'$ contains at least $3c$ components. Therefore, by \eqref{eq:D_n'_bound} $1+3c\leq \#D_n'\leq p_2(\PP^2,\bar E)+18+\frac{c_0'}{3}$, so \beq 3c\leq p_2(\PP^2,\bar E)+17+\frac{c_0'}{3}.\label{eq:bound_on_c}\eeq Suppose $c\geq 7$. By \eqref{eq:p2+c0'} the above inequality gives $c=7$ and $p_2(\PP^2,\bar E)=4-\frac{1}{3}c_0'$. The BMY inequality for $(X,D)$ gives $\frac{5}{6}c_0'\leq \ind(D)\leq 5-p_2(\PP^2,\bar E)=1+\frac{1}{3}c_0'$, so $c_0'\leq 2$. Then in fact $c_0'=0$, $p_2(\PP^2,\bar E)=4$ and $\ind(D)\leq 1$. We arrive at a contradiction, because the contribution to $\ind(D)$ of each cusp is strictly bigger than $\frac{1}{2}$. This finishes the proof of Theorem \ref{thm:MAIN4}.
\end{proof}

\begin{proof}[Proof of Theorem \ref{thm:MAIN2}(2)] Assume $\kappa_{1/2}(\PP^2,\bar E)=-\8$. By part (1) of the Theorem $\rho(X_n')\leq 21$, $\#D_n'\leq 21$ and $K_n'\cdot D_n'\leq 15$. Let $V'$ be a component of $D_n'$ and $V$ its image in $D_n$. We show that $(V')^2\geq -3$. If $V=0$ or if $V\leq \Delta_n$ then $(V')^2=-1$ or $-2$. If $V\leq \Upsilon_n$ and $\varphi_n\:X_n'\to X_n$ touches $V'$ then $V'$ is a component of a resolution of a semi-ordinary cusp, so $(V')^2\geq -3$. We may therefore assume $V\leq R:=D_n-\Upsilon_n-\Delta_n$. Since $2K_n+D_n^\flat$ is anti-ample off $\Upsilon_n+\Delta_n$, we get $0>-V^2+V\cdot(D_n^\flat-V)+2V\cdot (K_n+V)$, so since $V\cdot (D_n^\flat-V)\geq \beta_R(V)$, $$V^2-\beta_{R}(V)\geq -3.$$ We may therefore assume that $\varphi_n$ touches $V'$, otherwise $(V')^2=V^2\geq -2$ and we are done. Then $V$ is tangent to $E_n$ and all (three) components of $D_n$ passing through the point of tangency belong to $R$. Thus each blowup constituting $\varphi_n$ whose center is on the proper transform of $V$ decreases the self-intersection and the branching number (computed with respect to the reduced total transform of $R$) by $1$, hence $(V')^2\geq -2$.

Assume $\kappa_{1/2}(\PP^2,\bar E)\geq 0$. Let $V'$ be a component of $D_n'$. Put $\alpha=(V')^2$. If $\alpha\geq 0$ then $V'$ intersects every effective divisor non-negatively, so $0\leq V'\cdot (2K_n'+D_n')=-\alpha-4+\beta_{D_n'}(V').$ This gives $$\alpha\leq \max(0,\beta_{D_n'}(V')-4).$$ If $V'=E_n'$ then write $D_n'-V'=\sum V_j$, otherwise write $D_n-V'-E_n'=\sum_jV_j'$, where $V_j'$ are irreducible. Let $V$ and $V_j$ be respectively the proper transforms of $V'$ and $V_j'$ on $X$.  It follows from the definition of $\psi_i$ that the branching number of the image of $V'$ increases at most by one under $\psi_i$ and that it increases exactly by one if and only if $A_{i-1}$ meets this image. Therefore, $\beta_{D'}(V')\leq \beta_D(V)+n$ and $\sum (\beta_{D_n'}(V_j')-\beta_{D}(V_j))\leq 2n$, where the summation can be taken over any subset of irreducible components of $\sum_jV_j'$. If $V'=E_n'$ then by \eqref{eq:bound_on_c+n} we get $$(E_n')^2\leq \max(0,c+n-4)\leq \max(0,6-2p_2(\PP^2,\bar E)).$$
If $V'\neq E_n'$ then, as a consequence of the fact that every component of $D-E$ contracts to a smooth point, $\beta_D(V)\leq 3$, so $\alpha\leq \max(0,n-1)\leq 4$.

Put $\alpha_j=(V_j')^2$. Let $I$ and $J$ be the sets of these $V_j'$ for which $\alpha_j=-1$ and $\alpha_j\geq 0$ respectively. Then $$-\sum K_n'\cdot V_j'=\sum_j(\alpha_j+2)\leq \#I+\sum_{J}(\beta_{D_n'}(V')-2)\leq \#I+\sum_{J}(\beta_{D}(V_j)-2)+2n.$$ Since $\beta_D(V_j)\leq 3$ for every $V_j$, the right hand side of the inequality is bounded by $\#I+\#J+2n$.

Assume $V\neq E_n'$. By the genus formula $-\alpha-2-(E_n')^2-2=K_n'\cdot (V'+E_n')=K_n'\cdot D_n'-\sum_j K_n'\cdot V_j'$, so the above inequality gives $-\alpha\leq (E_n')^2+4+K_n'\cdot D_n'+(\#D_n'-2)+2n$. By \eqref{eq:b2_bound} and \eqref{eq:D_n'_bound} this results with the inequality $$-\alpha\leq (E_n')^2+3p_2(\PP^2,\bar E)+28+\frac{2}{3}c_0'.$$ Now for $p_2(\PP^2,\bar E)\neq 4$ we obtain $-\alpha\leq 6+p_2(\PP^2,\bar E)+28+\frac{2}{3}c_0'\leq 38+\frac{1}{3}c_0'\leq 40$, because $c_0'\leq c\leq 6$. If $p_2(\PP^2,\bar E)=4$ then $c_0'=0$ by \eqref{eq:p2+c0'}, so $-\alpha\leq 3p_2(\PP^2,\bar E)+28=40$.

Assume $V=E_n'$. As above we obtain $-\alpha\leq 2+K_n'\cdot D_n'+(\#D_n'-1)+2n$, hence $-\alpha\leq 3p_2(\PP^2,\bar E)+27\leq 39$.

\end{proof}

\brem Note that if $V,W$ are distinct components of $D_n'$ then, because $D_n'$ is an snc-divisor of arithmetic genus $n$, we have $0\leq V\cdot W\leq n+1\leq 6$. In particular, \ref{thm:MAIN2}(2) shows that if $\kappa_{1/2}(\PP^2,\bar E)\geq 0$ then the size and all entries of the intersection matrix of $D_n'$ are bounded. In case $\kappa_{1/2}(\PP^2,\bar E)=-\8$ we have no upper bound for diagonal entries. \erem

We make the following negativity conjecture for cuspidal curves.

\begin{conjecture}\label{con:k_half_vanishes} If $\bar E\subseteq \PP^2$ is a rational cuspidal curve then $\kappa_{1/2}(\PP^2,\bar E)=-\8$. \end{conjecture}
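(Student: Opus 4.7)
\medskip

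\noindent\textbf{Proof proposal.} The plan is to argue by contradiction: suppose there exists a rational cuspidal curve $\bar E\subseteq\PP^2$ of log general type with $\kappa_{1/2}(\PP^2,\bar E)\geq 0$, and derive a numerical inconsistency. Theorem \ref{thm:MAIN1detailed}(5) immediately gives $n\leq 4$ and snc-minimality of $(X_n',D_n')$, while Theorem \ref{thm:MAIN1detailed}(4) forces us into its nef alternative, so $K_{Y_n}+\frac{1}{2}D_{Y_n}$ must be nef on the minimal model. I would also use, as input, the Coolidge-Nagata theorem from \cite{Palka-Coolidge_Nagata1}: $h^0(2K_X+E)=0$, so every section of any $m(2K_X+D)$ must be supported on $D-E$, a strong restriction on how $p_2(\PP^2,\bar E)$ can become positive.

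The next step is to strengthen Lemma \ref{lem:square_of_2K+Dflat} by exploiting the nefness of $K_{Y_n}+\frac{1}{2}D_{Y_n}$ componentwise, not only through its square. For each component $V'$ of $D_n'$ whose image $V$ in $X_n$ survives under $\alpha_n$, Lemma \ref{lem:producing_A}(i) gives $(2K_n+D_n^\flat)\cdot V\geq 0$; combined with Corollary \ref{cor:A_i_exists}, the self-intersection bounds in Theorem \ref{thm:MAIN2}(2), the cusp bound $c\leq 6$ of Theorem \ref{thm:MAIN4}, and the refined BMY estimate \eqref{eq:b0(Delta)_bound}, this should reduce the problem to a finite list of admissible numerical types $(p_2(\PP^2,\bar E),n,c,c_0',\tau^*,\eta,\rho(X_n'))$ satisfying simultaneously all the global constraints already proved and the componentwise ones just derived.

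For each admissible type one then has to reconstruct the possible snc-minimal dual graphs of $D_n'$, using the connectivity condition $(P_6)$, the shape of the cuspidal resolutions $Q_j$ recalled in Section \ref{sec:logMMP}, and the numerical identities $E_n\cdot(D_n-E_n)=2c+n_1+\tau^*$ and $p_a(D_n)=n+\tau^*+c$ from Lemma \ref{lem:properties_of_(Xi,Di)}. Each such graph determines, via $\pi_0\circ\psi^{-1}$, the degree and cusp multiplicity sequences of a hypothetical $\bar E\subseteq\PP^2$, which must then also be consistent with the cuspidal genus formula $\binom{d-1}{2}=\sum_j\delta_j$ and with $2K_{\PP^2}+\bar E\geq 0$. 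The main obstacle, which I expect to be the decisive difficulty, is that this raw numerical list is likely too long to be eliminated case by case by elementary arithmetic; pushing through will probably demand either a Bogomolov-type sharpening that incorporates the full Zariski decomposition of $K_X+\frac{1}{2}D$ (improving Lemma \ref{lem:square_of_2K+Dflat} in the same spirit in which that lemma refines the generic BMY square bound), or genuinely global projective constraints on the mutual position of the cusps of $\bar E$ in $\PP^2$ that escape the semilocal log MMP analysis.
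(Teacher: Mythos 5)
There is a fundamental problem here: the statement you are trying to prove is stated in the paper as Conjecture \ref{con:k_half_vanishes}, and the paper gives no proof of it. Remark \ref{rem:rigidity} explicitly says only that Theorem \ref{thm:MAIN2} shows ``almost minimal models of \emph{potential counterexamples} to \ref{con:k_half_vanishes} satisfy strong combinatorial restrictions'' --- i.e.\ the author's own machinery stops short of a proof. Your proposal is an outline of a research program, not a proof, and you concede this yourself in the final sentence: the decisive step (eliminating the resulting list of numerical and combinatorial types) is not carried out, and you state that completing it ``will probably demand'' either a sharpened Bogomolov-type inequality or global projective constraints that ``escape the semilocal log MMP analysis.'' A plan whose last step is an acknowledged missing idea is a gap by definition, and it is precisely the gap that leaves this a conjecture.

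Beyond that structural point, two intermediate steps are shakier than you present them. First, the inference from the Coolidge--Nagata theorem ($h^0(2K_X+E)=0$) that ``every section of $m(2K_X+D)$ must be supported on $D-E$'' does not follow: an effective divisor in $|m(2K_X+D)|$ need not dominate $m(D-E)$, so you get no direct restriction on $p_2(\PP^2,\bar E)$ this way. (Also, Theorem \ref{thm:MAIN1detailed}(4) is conditional on $\kappa_{1/2}=-\8$ and so is vacuous under your contradiction hypothesis; the nefness of $K_{Y_n}+\frac{1}{2}D_{Y_n}$ instead comes from the termination condition in \ref{def:psi_and_models}(i) together with the exclusion of the log Mori fiber space cases, which do give $\kappa_{1/2}=-\8$.) Second, the passage from bounded numerical invariants to ``a finite list of dual graphs of $D_n'$'' each of which ``determines the degree and cusp multiplicity sequences of a hypothetical $\bar E$'' overstates what the combinatorics control: reconstructing $\bar E$ from $D_n'$ requires inverting $\psi$ and $\psi_0$, which depends on the positions of the curves $A_i$ and on the tangency data at the cusps, none of which is captured by the weighted dual graph alone. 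So even granting finiteness of the list, the elimination step is not a matter of checking the genus formula against finitely many graphs.
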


\brem\label{rem:rigidity} By the definition of $\kappa_{1/2}$ the conjecture simply says that for the minimal log resolution $(X,D)\to (\PP^2,\bar E)$ one has $h^0(m(2K_X+D))=0$ for every $m>0$. It implies both the weak rigidity conjecture and the Coolidge-Nagata conjecture (see \ref{con:conjectures}). Theorem $\ref{thm:MAIN2}$ shows that almost minimal models of potential counterexamples to \ref{con:k_half_vanishes} satisfy strong combinatorial restrictions. Also, the minimalization process is well described and has at most four steps, which gives a reasonable control over $D$.
\erem

We finish this section with the proof of Corollary \ref{thm:MAIN3} in case $\PP^2\setminus\bar E$ does not admit a structural $\C^{**}$-fibration.

\begin{proof}[Proof of Corollary \ref{thm:MAIN3} (part 1)] The divisor $D$ is a rational tree, so the core graph and the Eisenbud-Neumann diagram are trees, which implies that a universal bound on the number of vertices gives a bound on the number of possible graphs. For a reduced divisor $B$ let $\core(B)$ and $\vc(B)$ denote the core and respectively the number of vertices in the core graph of $B$. Write $t(B)$ for the number of maximal twigs of $B$. Let $t_\Delta(\psi')$ and $t(\psi')$ be respectively the number of maximal $(-2)$-twigs of $D$ contracted completely by $\psi'$ and the number of all maximal twigs of $D$ contracted completely by $\psi'$. Recall that $\upsilon_i=\#\Upsilon_i^0$. By the proof of \ref{thm:MAIN1detailed}(6) $\vc(D_n')\leq \#D_n'=\rho(X_n')+n\leq 3p_2(\PP^2,\bar E)+8+b_0(\Delta'_n)+\upsilon_n$. We claim that $$(\vc(D_i')-\vc(D_{i+1}'))+(\upsilon_{i+1}-\upsilon_i)\leq (b_0(\Delta_i')-b_0(\Delta_{i+1}'))+1.$$ By \ref{prop:(Xi,Di)_properties}(iv)-(v) $\upsilon_{i+1}-\upsilon_i\leq 1$. We may assume that $A_i$ meets $\Delta_i'$ in a tip of $D_i'$, otherwise $\psi_{i+1}'$ does not contract completely any maximal twig of $D_i'$, so $\vc(D_i')-\vc(D_{i+1}')\leq 0$ and the claim holds. Then the above inequality is equivalent to $(\vc(D_i')-\vc(D_{i+1}'))+(\upsilon_{i+1}-\upsilon_i)\leq 2.$ Since $\psi_{i+1}'$ contracts completely at most two maximal twigs of $D_i$, we have $\vc(D_i')-\vc(D_{i+1}')\leq 2$. But if equality holds then the maximal twigs of $D_i'$ contracted by $\psi_{i+1}'$ meet different components of the remaining part of $D_i'$, so $(\psi_{i+1})_*\Upsilon_i^0=\Upsilon_{i+1}^0$ and hence $\upsilon_{i+1}-\upsilon_i=0$. Thus the claim holds. It follows that $$\vc(D)-\vc(D_n')+\upsilon_n-c_0'\leq b_0(\Delta_0')-b_0(\Delta_n')+n,$$ and hence that $$\vc(D)\leq 3p_2(\PP^2,\bar E)+8+b_0(\Delta_0')+n+c_0'.$$ By \eqref{eq:b0(Delta)_bound} $$\vc(D)\leq p_2(\PP^2,\bar E)+18+n+\frac{1}{3}c_0'$$ and by \eqref{eq:BMY_for_Dtilda} $p_2(\PP^2,\bar E)+n\leq 5-\ind(\tilde D)\leq 5-\frac{5}{6}c_0'$. Then $\vc(D)\leq 23-\frac{1}{2}c_0'\leq 23$. Since $D$ has at least three maximal twigs, $\#\core(D)\leq 20$.

\end{proof}

\section{The structural $\C^{**}$-fibration}\label{sec:Cstst}

In this section we prove Corollary \ref{thm:MAIN3} and Theorem \ref{thm:MAIN4} in case $\PP^2\setminus\bar E$ admits a structural $\C^{**}$-fibration. By definition, there is a $\PP^1$-fibration $\xi_Y\:Y_n\to \PP^1$ with irreducible fibers, such that $D_{Y_n}$ meets a general fiber exactly three times. The morphism $\alpha_n\:(X_n,D_n)\to (Y_n,D_{Y_n})$ decomposes as $$(X_n,D_n)\xrightarrow{\alpha_n^+} (Z,D_Z)\xrightarrow{\alpha_n^-} (Y_n,D_{Y_n}),$$ where $\alpha_n^+$ contracts $\Upsilon_n+\Delta_n^+$, $\alpha_n^-$ contracts $\Delta_n^-$ and $D_Z=(\alpha_n^+)_*D_n$. It follows that $Z$ is smooth. We denote the $\PP^1$-fibration $\xi_Y\circ\alpha_n^-$ induced on $Z$ by $\xi$. Note that even if $F$ is a smooth fiber of $\xi$, its affine part $F\cap (Z\setminus D_Z)$ might be a singular fiber of $\xi_{|Z\setminus D_Z}$, i.e.\ might be non-isomorphic to $\C^{**}$ as a scheme.

\blem\label{lem:Cstst_fibrations} Assume $\PP^2\setminus\bar E$ admits a structural $\C^{**}$-fibration. Let $(Z,D_Z)$ and $\xi\:Z\to\PP^1$ be as above. Let $h$ be the number of $\xi$-horizontal components of $D_Z$, $\nu$ - the number of fibers of $\xi$ contained in $D_Z$ and $\sigma$ - the number of singular fibers of $\xi_{Z\setminus D_Z}$. Then:\benum[(i)]

\item $\nu=n+2-h$,

\item $\sigma=h+1-n$,

\item every singular fiber of $\xi$ has a dual graph as below, where black dots stand for horizontal components of $D_Z$ and are joined by dotted lines with components meeting them:$$\xymatrix{{\bullet}\ar@{.}[r]&  {-2}\ar@{-}[r] &{-2}\ar@{-}[r]\ar@{-}[d] &{-2}\\
{}&{\bullet}\ar@{.}[r]&{-1}&{}}.$$

\item $\bar E$ has at most three cusps.
\eenum
\elem

\vskip 0.3cm
\begin{proof} Let $H$ be the divisor consisting of components of $D_Z$ which are horizontal for $\xi$.

(iii) Let $f$ be a singular fiber of $\xi$. Put $F=f_{red}$. Denote by $L_f\leq F$ the proper transform of $\alpha_{n*}F$. Since the support of $F-L_f$ is contained in the support of $\Exc \alpha_n^-=\Delta_n^-$, we see that $F-L_f$ consists of $(-2)$-curves and is contained in the sum of maximal twigs of $D_Z$. Now $F$, as a fiber of a $\PP^1$-fibration of a smooth surface, is an (snc-) tree containing a $(-1)$-curve, so $L_f$ is the $(-1)$-curve. It follows that either $F$ is a chain of type $[2,1,2]$ or $F-L_f$ is a chain of type $[2,2,2]$ and $L_f$ meets the middle $(-2)$-curve. Suppose that $F=[2,1,2]$. Denote the $(-2)$-tips by $T_1, T_2$. The multiplicity $\mu(L_f)$ of $L_f$ in $f$, equals $2$. Consider the case when $L_f$ is a component of $D_Z$. If $H\cdot L_f=0$ then, say, $H\cdot B_1\geq 2$, so $B_1$ is not contained in a twig of $D_Z$; a contradiction. Thus $H$ meets $L_f$ and hence $L_f\cdot H=1$. Then $H$ meets exactly one $(-2)$-tip of $F$, say $B_1$, so the proper transform of $B_2$ on $X_n$ is contained in $\Delta_n^+$. This is impossible in view of the definition of $\alpha_n^+$; a contradiction. Consider the case when $L_f$ is not a component of $D_Z$. Because of the connectedness of $D_Z$, $H$ meets both $(-2)$-tips of $F$. The multiplicity of both tips in $f$ is one, so since they are part of $\Delta_n^-$, $H$ meets each of them once and transversally. Thus $H$ meets $L_f$, so $H\cdot f\geq 2+\mu(L_f)=4$; a contradiction.

Therefore $F-L_f$ is of type $[2,2,2]$ and $L_f$ meets the middle $(-2)$-curve. The surface $\PP^2\setminus \bar E$ is assumed to be of log general type, so it contains no $\C^1$ by \ref{lem:cuspidal_of_gt_and_khalf}(iv). This means that $H$ meets $L_f$. Since $L_f$ and the middles $(-2)$-curve in $F-L_f$ have multiplicity $2$ in $f$, the latter does not meet $H$.

(i) Each fiber of $\xi$ contains at most one component not contained in $D_Z$, so $\rho(Z)=2+(\#(D_Z-H)-\nu)$. We have $\rho(Z)=\#D_Z+n$, so we obtain $h+\nu=n+2$.

(ii) Put $\pi=\xi_{|Z\setminus D_Z}$. The Euler characteristic of $Z\setminus D_Z$ is $1$, the Euler characteristic of a base of $\pi$ is $\chi(\PP^1)-\nu=2-\nu$ and the Euler characteristic of a general fiber is $\chi(\C^{**})=-1$. The Suzuki formula (\cite[3.1.8]{Miyan-OpenSurf}) reads as $$1=(2-\nu)(-1)+\sum(\chi(F_s)+1),$$ where the sum is taken over singular fibers of $\pi$. Now by (iv) we see that if $\bar F_s$, the fiber of $\xi$ containing $F_s$, is singular then $F_s\cong \C^*$. But the latter holds also when $\bar F_s$ is smooth, because the only other option is $F_s\cong \C^1$, which is impossible by \ref{lem:cuspidal_of_gt_and_khalf}(iii). Thus $\chi(F_s)=\chi(\C^{*})=0$, which gives $3-\nu=\sigma$.

(iv) Suppose $\bar E$ has at least four cusps. Let $p\:X_0\to \PP^1$ be the $\PP^1$-fibration induced by $\xi$. Note that every $C_i$, $i=1,\ldots,c$ is tangent to $E_0$. If $E_0$ is vertical for $p$ then, since all fibers of $\PP^1$-fibrations are snc-divisors, all $C_i$ are horizontal, so $h\geq c\geq 4$, which is false. Similarly, if $E_0$ is a section of $p$ then, all $C_i$, being tangent to $E_0$, are horizontal for $p$, and we get a contradiction the same way. Thus $E_0$ is either a $2$- or a $3$-section of $p$. No fiber of $p$ can be contained in $D_0$, because such a fiber would be contained in $D_0-E_0$, which is negative definite. It follows that if $E_0$ is a $3$-section then we get $\#D_0=\rho(X_0)=2+\sum (\#F-1)\geq 2$, where the sum is taken over all singular fibers of $p$, which gives $\#D_0\geq 2+\#(D_0-E_0)=\#D_0+1$; a contradiction. Therefore, $E_0$ is a $2$-section of $p$, so $p_{|E_0}$ is a $2:1$ covering of $\PP^1$ by $\PP^1$. Such a cover has exactly two ramification points. But now at least $c-1\geq 3$ of the $C_i$'s are horizontal and since they are tangent to $E_0$, the corresponding intersection points with $E_0$ are ramification points for $p_{|E_0}$; a contradiction.
\end{proof}

\vskip 0.3cm

\begin{proof}[Proof of Corollary \ref{thm:MAIN3} (part 2)]
We may assume that $\PP^2\setminus\bar E$ has a structural $\C^{**}$-fibration as above. Let $(X,D)\to (\PP^2,\bar E)$ be the minimal log resolution. Recall that $c_0$ and $c_1$ are the numbers of semi-ordinary and non-semi-ordinary cusps and that $c_0'$ denotes the number of ordinary cusps. Put $c_0''=c_0-c_0'$. Clearly, $c=c_0'+c_0''+c_1$. We have $\vc(D_{Y_n})\leq \#D_{Y_n}=\rho(Y_n)+n$ and, by the definition of $\Delta_n$ and $Y_n$, $$\vc(D_n)-\vc(D_{Y_n})\leq b_0(\Delta_n^-)+b_0(\Delta_n^+)+\#\Upsilon_n=b_0(\Delta_n^-)+2(c_0+\eta)-c_0'.$$ We have $\vc(D_0)-\vc(D_n)\leq 2n$, so we get $$\vc(D_0)\leq 2c_0-c_0'+2\eta+b_0(\Delta_n^-)+\rho(Y_n)+3n.$$ For cores we have $\#\core(D_{Y_n})\leq \#D_{Y_n}=\rho(Y_n)+n$, $\#\core(D_n)-\#\core(D_{Y_n})\leq c_0+\eta$ and $\#\core(D_0)\leq \#\core(D_n)$, hence $$\#\core(D_0)\leq n+\eta+c_0+\rho(Y_n).$$

\begin{claim2} $\vc(D)-\vc(D_0)\leq 2(c_1+c_0')+c_0''\text{\ \ and\ \ } \#\core(D)-\#\core(D_0)\leq c_1$. \end{claim2}

This is the part where we use the $\C^{**}$-fibration. It is clear that the contribution from a semi-ordinary cusp to the left hand side of the second inequality is zero and to the first inequality is $2$ or $1$ depending whether the cusp is ordinary or not. It is therefore enough to show that resolving the unique non-snc point in $D_0$ over each non-ordinary cusp adds at most two vertices to the core graph and at most one component to the core.

Let $p\in (D_0-E_0)\cap E_0$ be the non-snc point over some cusp $q_j$ and let $A$ be the component of $D_0-E_0$ tangent to $E_0$. We have $A\cdot E_0=\tau_j$. Let $V$ be the part of $\Exc \psi_0$ over $q_j$. Then $V=[1,(2)_{\tau_j-1}]$. If $A$ is a unique component of $D_0-E_0$ meeting $E_0$ at $p$ or if $\tau_j=2$ then we are done. We may therefore assume that $\tau_j\geq 3$ and that there is a component $B$ of $D_0-E_0-A$ meeting $E_0$ at $p$. By the definition of $\psi_0$ no other component of $D_0$ goes through $p$ and $B\cdot E_0=1$. Since $A$, $B$, $E_0$ go through a non-snc point of $D_0$, they are not contracted by $\alpha_n\circ \psi$ and their intersections at $p$ are not affected by this morphism. Let $A'$, $B'$ and $E'$ be their images in $D_{Y_n}$ and let $f$ be the fiber containing the image of $p$. Denote the divisor of horizontal components of $D_{Y_n}$ by $H$. We have $H\cdot f=3$, so if none of $A'$, $B'$, $E'$ is vertical then $f\cap (Y\setminus D_Y)\cong \C^1$, which contradicts \ref{lem:cuspidal_of_gt_and_khalf}(iii). If $A'=f$ or $E'=f$ then $H\cdot f\geq \tau_j+1\geq 4$; a contradiction. Thus $B'=f$. Let $p'$ be the point of intersection of $H$ and $B'$ different than $p$. We have $(H-E'-A')\cdot B'=1$, so the intersection at $p'$ is transversal. Because $B^2<0=(B')^2$, we see that $\psi$ touches $B$ at $p'$. It follows that there is a twig $T$ of $D_0$ (possibly empty) which meets $B$ in its tip, is contracted by $\psi$ and such that $B\cdot (D_0-E_0-A-T)=0$. Thus the $(-2)$-curves of $V$ are contained in a twig of $D$. The claim follows.

\vskip 0.3cm By \ref{prop:(Xi,Di)_properties} $\eta\leq n$ and by \ref{lem:Cstst_fibrations} $b_0(\Delta_n^-)\leq \sigma$ and $\sigma+n=h+1\leq 4$. Because $c\leq 3$ by \ref{lem:Cstst_fibrations}(iv), we obtain $\vc(D)\leq 2c+c_0+5n+2+b_0(\Delta_n^-)\leq 3c+4n+6\leq 9+16+6=31$ and, since $\rho(Y_n)=2$, $\#\core(D)\leq2n+c+2\leq 13$. This finishes the proof of Corollary \ref{thm:MAIN3}.

\end{proof}

\bibliographystyle{amsalpha}
%\bibliography{bibl}
\bibliography{C:/KAROL/PRACA/PUBLIKACJE/BIBL/bibl2}
\end{document}